\newtheorem{lemma}{\rm \indent LEMMA}[section]
\newtheorem{definition}{\rm \indent D\small EFINITION}[section]
\newtheorem{theorem}[lemma]{\rm \indent T\small HEOREM}
\newtheorem{remark}[lemma]{\rm \indent R\small EMARK}
\newtheorem{proposition}[lemma]{\rm \indent P\small ROPOSITION}
\newtheorem{corollary}[lemma]{\rm \indent C\small OROLLARY}
\newcommand {\R} {\mathbb{R}} 
 \newcommand {\N} {\mathbb{N}}
\newcommand{\la}{\langle}
\newcommand{\ra}{\rangle}
\newcommand{\yy}{\bar{y}}
\newcommand{\hh}{\bar{h}}
\newcommand {\p} {\partial}
\newcommand{\gpiu}{\gamma^+}
\newcommand{\gmeno}{\gamma^-}
\numberwithin{equation}{section}
\begin{document}
\title{\bf{Stability for the Calder\'on's problem for a class of anisotropic conductivities via an ad-hoc misfit functional}}
\author{Sonia Foschiatti \thanks{Dipartimento di Matematica e Geoscienze, Universit\`a degli Studi di Trieste, Italy, sonia.foschiatti@phd.units.it}
	\qquad Romina Gaburro\thanks{Department of Mathematics and Statistics, University of Limerick, Castletroy, Limerick, Ireland, romina.gaburro$@$ul.ie}
\qquad Eva Sincich\thanks{Dipartimento di Matematica e Geoscienze, Universit\`a degli Studi di Trieste, Italy, esincich@units.it}}
\date{}
\maketitle

\begin{abstract}
We address the stability issue in Calder\'on's problem for a special class of anisotropic conductivities of the form $\sigma=\gamma A$ in a Lipschitz domain $\Omega\subset\mathbb{R}^n$, $n\geq 3$, where $A$ is a known Lipschitz continuous matrix-valued function and $\gamma$ is the unknown piecewise affine scalar function on a given partition of $\Omega$. We define an ad-hoc misfit functional encoding our data and establish stability estimates for this class of anisotropic conductivity in terms of both the misfit functional and the more commonly used local Dirichlet-to-Neumann map. 
\end{abstract}
\vskip 0.3 cm 
\noindent \textbf{Keywords:}   Calder\'on's problem, anisotropic conductivity, stability, misfit functional

\section{Introduction}

The paper addresses the so-called Calder\'on's inverse conductivity problem of recovering the conductivity $\sigma$ of a body $\Omega\subset\mathbb{R}^n$ by taking measurements of voltage and electric current on its surface $\partial\Omega$. More specifically, the case when the conductivity is
anisotropic and it is \textit{a-priori} known to be of type $\sigma = \gamma A$, where $A$ is a known Lipschitz continuous matrix valued function on $\Omega$ and $\gamma$ is a piecewise-affine unknown function on a given partition of $\Omega$, is considered. It is well known that in absence
of internal sources or sinks, the electrostatic potential $u$ in a conducting body, described by a domain $\Omega\subset{\mathbb R}^n$, is governed
by the elliptic equation

\begin{equation}\label{eq conduttivita'}
   \mbox{div}(\sigma\nabla{u}) = 0 \qquad \mbox{in} \quad \Omega ,
\end{equation}

where the symmetric, positive definite matrix $\sigma(x)=(\sigma_{ij}(x))_{i,j=1}^n$, $x\in\Omega$ represents the (possibly anisotropic) electric conductivity. The inverse conductivity problem consists of finding $\sigma$ when the so called Dirichlet-to-Neumann (D-N) map

$$
\Lambda_{\sigma}:\,{H}^{\frac{1}{2}}(\partial\Omega)\,\ni\,u\vert_{\partial\Omega}\,
\rightarrow\,{\sigma}\nabla{u}\cdot\nu\vert_{\partial\Omega}\in{H}^{-\frac{1}{2}}(\partial\Omega)
$$

is given for any $u \in {H}^{1}(\Omega)$ solution to (\ref{eq conduttivita'}). Here, $\nu$ denotes the unit outer normal to $\partial\Omega$. If measurements can be taken only on one portion $\Sigma$ of $\partial\Omega$, then the relevant map is called the local D-N map ($\Lambda_{\sigma}^{\Sigma}$).


This problem arises in many different fields such as geophysics, known as DC method, medicine, known as Electrical Impedance Tomography (EIT) and non-destructive testing of materials. The first mathematical formulation of the inverse conductivity problem is due to Calder\'{o}n \cite{C}, where he addressed the problem of whether it is possible to determine the (isotropic) conductivity $\sigma = \gamma I$ by the D-N map. This seminal paper opened the way to the solution to the uniqueness issue
where one is asking whether $\sigma$ can be determined by the knowledge of $\Lambda_{\sigma}$ or its local version when measurements are available on a portion of $\partial\Omega$ only.

The case when measurements can be taken over the full boundary has been studied extensively in the past and the fundamental papers \cite{A1}, \cite{Koh-V1}, \cite{Koh-V2}, \cite{N} and \cite{Sy-U} had led the way of solving the problem of uniqueness in the isotropic case. We also recall the uniqueness results of Druskin who, independently from Calder\'{o}n, dealt directly with the geophysical setting of the problem in \cite{D1}-\cite{D3}. His uniqueness result obtained in \cite{D2} was for conductivities described by piecewise constant functions (see also \cite{A-V}). The problem of recovering the conductivity $\sigma$ by local measurements has been treated more recently (see \cite{La-U}, \cite{La-U-T}). In the present paper, we consider the issue of stability in the inverse conductivity problem, therefore we refer to \cite{Bo}, \cite{Che-I-N} and \cite{U} for an overview regarding the issues of uniqueness and reconstruction of the conductivity.

Regarding the stability issue, Alessandrini proved in \cite{A} that, in the isotropic case and dimension $n\geq 3$, assuming \textit{a-priori} bounds on $\sigma$ of the form $\|\sigma\|_{H^s(\Omega)}\leq E$ , $s>\frac{n}{2}+2$, leads to a continuous dependance of $\sigma$ in $\Omega$ upon $\Lambda_{\sigma}$ of logarithmic type. We also refer to \cite{B-B-R}, \cite{B-F-R} and \cite{Liu} for subsequent results in this direction. Even though stability at the boundary $\partial\Omega$ is of Lipschitz type (see \cite{A-G}, \cite{A-G1}), Mandache \cite{Ma} showed that in the interior of $\Omega$, the inconvenient logarithmic type of stability is the best possible, in any dimension $n\geq 2$, under \textit{a-priori} smoothness assumptions on $\sigma$. It seems therefore reasonable to think that, in order to restore stability in a really (Lipschitz) stable fashion, one needs to replace in some way the \textit{a-priori} assumptions expressed in terms of regularity bounds with \textit{a-priori} pieces of information of a different type that suit the underlying physical problem. Alessandrini and Vessella showed in \cite{A-V} that when $\sigma$ is isotropic and piecewise constant on a given partition of $\Omega$, then Lipschitz stability can be restored in terms of the local D-N map (conditional stability). Rondi \cite{R} proved that the Lipschitz constant has an exponential behaviour with rispect to the number of subdomain of the partition. From a medical imaging point of view, the partition of $\Omega$ may represent different volumes occupied by different tissues or organs and one can think that their geometrical configuration is given by means of other imaging modalities such as MRI. We also recall \cite{A-dH-G-S}, \cite{Be-Fr}, \cite{Be-Fr-V}, \cite{Be-Fr-Mo-Ro-Ve}, \cite{RS1} and \cite{A-dH-G-S1}, \cite{Be-dH-Q}, \cite{Be-dH-F-S}, \cite{RS2} where similar Lipschitz stability results have been obtained for the classical and fractional Calder\`on's problem, the Lam\'{e} parameters and for a Schr\"{o}dinger type of equation.

In this paper we address the issue of stability in Calder\`on's problem in presence of anisotropy. This choice is motivated by the fact that anisotropy appears quite often in nature. Most tissues in the human body are anisotropic. In the theory of homogenization, anisotropy results as a limit in layered or fibrous structures such as rock stratum or muscle, as a result of crystalline structure or of deformation of an isotropic material. In the geophysical context, in 1920, Conrad Schlumberger \cite{S} recognized that anisotropy may affect geological formations' electrical properties and anisotropic effects when measuring electromagnetic fields in geophysical applications have been studied ever since. Individual minerals are typically anisotropic but rocks composed of them can appear to be isotropic. 

From a mathematical point of view, the inverse problem with anisotropic conductivities is an open problem. Since Tartar's observation \cite{Koh-V1b} that any diffeomorphism of $\Omega$ which keeps the boundary points fixed has the property of leaving the D-N map unchanged, whereas $\sigma$ is modified, different lines of research have been pursued. One direction has been to find the conductivity up to a diffeomorphism which keeps the boundary fixed (see \cite{As-La-P}, \cite{B}, \cite{La-U}, \cite{La-U-T}, \cite{Le-U}, \cite{N} and \cite{Sy}). Another direction has been the one to formulate suitable \textit{a-priori} assumptions (possibly fitting some physical context) which constrain the structure of the unknown anisotropic conductivity. For instance, one can formulate the hypothesis that the directions of anisotropy are known while some scalar space dependent parameter is not. Along this line of reasoning, we mention the results in \cite{A}, \cite{A-G}, \cite {A-G1}, \cite{G-L}, \cite{G-S}, \cite{Koh-V1} and \cite{L}. We also refer to \cite {A-dH-G}, \cite{As-La-P}, \cite{B}, \cite{Ds-K-S-U}, \cite{Ds-Ku-L-S}, \cite{F-K-R}, \cite{La-U} and for related results in the anisotropic case and to \cite{A-dH-G}, \cite{Gr-La-U1} and \cite{Gr-La-U2} for examples of non-uniqueness. 

Here, we follow this second direction by \textit{a-priori} assuming that the conductivity is of type

	\begin{equation}\label{a priori info su sigmaj}
	\sigma(x)=\sum_{m=1}^{N}\gamma_{m}(x)\,\chi_{D_m}(x)\,A(x),\qquad\mbox{for any}\:x\in\Omega,
	\end{equation}

where $\gamma_{m}(x)$ is an unknown affine  scalar function on $D_m$, $A$ is a known Lipschitz continuous matrix-valued function on $\Omega$ and $\{D_m\}_{m=1}^N$ is a given partition of $\Omega$ (the precise assumptions on $\sigma$, $A$ and $\{D_m\}_{m=1}^N$ are given in Subsections \ref{subsec assumption domain} and \ref {apriori sigma}). Allowable partitions for our machinery to work include, in the geophysical setting, models of layered media and bodies with multiple inclusions. The ill-posed nature of the EIT inversion is aggravated the deeper one tries to image inside a body $\Omega$ \cite{Nag-U-W}, where EIT image resolution becomes quite poor (see \cite{Ga-H}), leading to blurry images. Thus, in a geophysical context for example, it becomes difficult to recognise individual thin sediments and rock layers or fractures in the deep subsurface, but the `average' effect at large scale of fine layering and fracturing are still shown as equivalent anisotropic media. It seems therefore reasonable to model the conductivity $\sigma$ within each layer $D_m$ by an anisotropic conductivity $\sigma_m$ to make it up for the finer layering structure within $D_m$ that otherwise might have been neglected the deeper one goes inside $\Omega$ due to poor resolution. 

In order to introduce the misfit functional, consider two anisotropic conductivities $\sigma^{(1)}$ and $\sigma^{(2)}$ of type \eqref{a priori info su sigmaj}. If measurements are locally taken on an open portion $\Sigma\subset\partial\Omega$, we conveniently enlarge the physical domain $\Omega$ to an augmented domain $\widetilde\Omega$ and consider Green's functions $G_i$ for $\mbox{div}(\sigma^{(i)}\nabla\cdot)$ in $\widetilde\Omega$, for $i=1,2$, with poles $y,z\in\widetilde\Omega\setminus\bar\Omega$ respectively. Hence we express the error in the measurements corresponding to $\sigma^{(1)}$ and $\sigma^{(2)}$ by means of the misfit functional
\begin{equation}\label{misfit2}
\mathcal{J}(\sigma^{(1)},\sigma^{(2)})=\int_{D_y\times D_z} \left|S_{\mathcal{U}_0}(y,z)\right|^2dy dz,
\end{equation}
where $D_y$, $D_z$ are suitably chosen sets compactly contained in $\widetilde\Omega\setminus\bar\Omega$ and $S_{\mathcal{U}_0}(y,z)$ is defined by the surface integral
\begin{equation}\label{misfit1}
S_{\mathcal{U}_0}(y,z) = \int_{\Sigma} \left[ G_2(\cdot,z)\sigma^{(1)}(\cdot)\nabla G_1(\cdot,y)\cdot\nu - G_1(\cdot,y)\sigma^{(2)}(\cdot)\nabla G_2(\cdot,z)\cdot\nu\right]\:dS.
\end{equation}
We have obtained the following stability estimate of H\"older type:
\begin{equation}\label{stabilita' globale intro}
	\|\sigma^{(1)}-\sigma^{(2)}\|_{L^{\infty}(\Omega)}\leq C
	\left(\mathcal{J}(\sigma^{(1)},\sigma^{(2)})\right)^{1\slash 2},
	\end{equation}
where $C>0$ is a constant that depends on the \textit{a-priori} information only. The augmented domain $\widetilde\Omega$ is chosen in such a way that $G_1(\cdot, y)\big|_{\partial\Omega}$, $G_2(\cdot, z)\big|_{\partial\Omega}$ are supported in $\Sigma$ in the trace sense, hence belonging to the domain of the local D-N maps  $\Lambda_{\sigma_i}^{\Sigma}$, $i=1,2$ (see Section \ref{localDN} for the formal definitions of the local D-N map and the appropriate spaces). Therefore, not only \eqref{stabilita' globale intro}, together with the well-known Alessandrini's identity \cite{A1}, implies a Lipschitz stability estimate of $\sigma$ in terms of the more commonly used local D-N map in the mathematical literature, but it also indicates that the set of measurements $\left\{G(\cdot , y)\big|_{\partial\Omega}\right\}$, with $y,\in\widetilde\Omega\setminus\bar\Omega$ is enough to stably determine $\sigma$. A Lipschitz stability estimate in terms of $\Lambda^{\Sigma}_{\sigma}$ was obtained in \cite{G-S} for the case $\sigma = \gamma A$, with $\gamma$ piecewise constant instead. The piecewise affine parametrizations considered in the present work tie in well with the finite elements method for
computations. With the stability estimate \eqref{misfit2} at hand, one can apply certain iterative methods for reconstruction within a subspace of piecewise affine functions with a starting model at a distance less than the radius of convergence to the unique solution \cite{A-dH-F-G-S}, \cite{F-A-Ba-dH-G-S}, \cite{dH-Q-S} and \cite{dH-Q-S 1}. This radius is known to be roughly inversely proportional to the stability constant appearing in the estimate. More importantly, we can iteratively construct the best piecewise affine approximation for a given domain partition. Since the stability constant will grow at least exponentially with the number of subdomains in the partition \cite{R}, the radius of convergence shrinks accordingly. One can expect accurate piecewise affine approximations with relatively less subdomains (compared to the piecewise constant case of \cite{G-S}) to describe the subsurface, noting that the domain partition need not be uniform and may show a local refinement, and hence our result provides the necessary insight for developing a practical approach with relatively minor prior information.

To the best of our knowledge a first stability estimate in terms of an ad-hoc misfit functional was achieved in the mathematical literature in \cite{A-dH-F-G-S} in the context of the Full Waveform Inversion. Such an estimate proved to be key for the implementation and reliability of a reconstruction procedure (see  \cite{A-dH-F-G-S, F-A-Ba-dH-G-S}) based on the use of Cauchy data only,  being the latter independent on the availability of the Dirichlet to Neumann map. In the more recent result in \cite{Fa-dH-S} an ad-hoc misfit functional has been introduced in the context of imaging elastic media.

We also observe that another advantage of choosing the misfit functional over the local D-N map (even if available) to model the measurements error in EIT is motivated by its potentially simpler numerical implementation, compared to the computation of the norm of bounded linear operators between $H^{\frac{1}{2}}$ spaces and their duals. Moreover, the misfit functional could also provide, again in the context of a possible numerical reconstruction of $\sigma$, additional features compared to the more traditional least-squares approach, allowing, in particular, for a distinction between the computational and the observational measurements. This is due to the introduction of the possibly distinct sets $D_y$ and $D_z$ that can almost be arbitrarily chosen outside the physical domain $\Omega$. For example, $D_y$ could be an arbitrarily chosen set for the numerical data acquisition for the sake of the simulations, where $D_z$ could model a more realistic set that fits the geometric disposition of the electrodes in the actual measurements acquisition. Hence, in the discrete setting, such distinction can potentially require minimal information about the observational acquisition geometry of the electrodes employed for the observational measurements. This is due to the definition of the misfit functional that does not compare simulations and observations directly, but it rather compares products of observed and simulated measurements. Note also that with a slight modification, our arguments can apply when the local Neumann-to-Dirichlet (N-D) map is available instead, see for instance the discussion in \cite{A-G1}.

The paper is organized as follows. In Section \ref{sec2} we introduce the main assumptions on the domain $\Omega$ and the anisotropic conductivity $\sigma$. Section \ref{sec2} contains the formal definitions of the local D-N map (subsection \ref{localDN}), the misfit functional (sunsection \ref{misfitsec}) and the statement of our main result (Theorem \ref{teorema principale}). A Lipschitz stability estimate in terms of the local D-N map follows as a straightforward consequence (Corollary \ref{corollary}). Section \ref{sec3} is devoted to the introduction of some technical tools of asymptotic estimates for the Green function (Proposition \ref{teorema stime asintotiche}) and propagation of smallness (Proposition \ref{proposizione unique continuation finale}) needed for the machinery of the proof of Theorem \ref{teorema principale}. The proof of Theorem \ref{teorema principale} and Corollary \ref{corollary} are also contained in this section. Section \ref{technical} contains the proofs of Proposition \ref{teorema stime asintotiche} and Proposition \ref{proposizione unique continuation finale}.

\section{Misfit functional and the main result}\label{sec2}

\setcounter{equation}{0}

\subsection{Assumptions about the domain $\Omega$}\label{subsec assumption domain}

For $n\geq 3$,
a point $x\in \mathbb{R}^n$ will be denoted by
$x=(x',x_n)$, where $x'\in\mathbb{R}^{n-1}$ and $x_n\in\mathbb{R}$.
Moreover, given a point $x\in \mathbb{R}^n$,
we will denote with $B_r(x), B_r'(x')$ the open balls in
$\mathbb{R}^{n},\mathbb{R}^{n-1}$ respectively centred at $x$ and $x'$ with radius $r$
and by $Q_r(x)$ the cylinder
\[Q_r(x)=B_r'(x')\times(x_n-r,x_n+r).\]

Set $B_r=B_r(0)$, $Q_r=Q_r(0)$, the positive real half space $\R^n_+ = \{ (x',x_n)\in \R^n\,:\, x_n>0 \}$, the positive semisphere centred at the origin $B^+_r = B_r\cap \R^n_+$, the positive semicylinder $Q^+_r = Q_r\cap\R^n_+$. Similar definitions for $\R^n_-$, $B^-_r$ and $Q^-_r$.

Let us recall a couple of definitions concerning the regularity of the boundary of the domain.
\begin{definition}\label{def Lipschitz boundary}
	Let $\Omega$ be a bounded domain in $\mathbb R^n$. A portion
	$\Sigma$ of $\partial\Omega$ is of Lipschitz class with constants
	$r_0,L>0$ if for each point $P\in\Sigma$ there exists a rigid
	transformation of coordinates under which $P$ coincides with the origin and
	$$\Omega\cap Q_{r_0}=\left\{x\in Q_{r_0}\,:\,x_n>\varphi(x')\right\},$$
	where $\varphi$ is a Lipschitz function on $B'_{r_0}$ such that $\varphi(0)=0$ and $	\|\varphi\|_{C^{0,1}(B'_{r_0})}\leq Lr_0.$
\end{definition}

\begin{definition}\label{flat portion}
	Let $\Omega$ be a domain in $\mathbb R^n$. A subset $\Sigma$ of
	$\partial\Omega$ is a flat portion of size $r_0$
	if for each point $P\in\Sigma$ there exists a rigid transformation of coordinates under which $P$ coincides with the origin and
	\begin{eqnarray}\label{local flat cond}
	\Sigma\cap{Q}_{r_{0}} =\left\{x\in
	Q_{r_0}\,:\,x_n=0\right\},\quad &&
	\Omega\cap {Q}_{r_{0}}=\left\{x\in
	Q_{r_0}\,:\,x_n>0\right\}.\nonumber
	\end{eqnarray}
\end{definition}


From now on, we will consider $\Omega\subset \R^n$, $n\geq 3$ as a bounded, measurable domain with boundary $\p\Omega$ of Lipschitz class with positive constants $r_0, L$ as in Definition \ref{def Lipschitz boundary} and satisfying

\begin{equation}\label{misura Omega}
|\Omega|\leq N r_0^n,
\end{equation}

where $|\Omega|$ denotes the Lebesgue measure of $\Omega$. Moreover, we assume that there exists a partition of bounded subdomains $D=\{D_m\}_{m=1}^N$ contained in $\Omega$ such that the following conditions hold:
\begin{enumerate}
	\item $D_m$ for $m=1,\dots,N$ are connected, pairwise non-overlapping subdomains with boundaries $\p D_m$ which are of Lipschitz class with constants $r_0$, $L$
	\item $\overline{\Omega} = \bigcup_{m=1}^{N}\overline{D}_m$;
	\item (\textit{Chain of subdomains.}) First, we assume that there exists one region, let us call it $D_1$, such that the intersection
		$\partial{D}_1\cap\Sigma$ contains a \emph{flat} portion
		$\Sigma_1$ of size $r_0\slash 3$ (see Definition \ref{flat portion}) and that for every $i\in\{2,\dots , N\}$ there exists a collection of indices $m_1,\dots ,
		m_K\in\{1,\dots , N\}$ such that $D_{m_1}=D_1$ and $D_{m_K}=D_i$ and the subdomains are pairwise disjoint. Secondly, we assume that, for every fixed sub-index $k=1,\dots , K$ of the chain, the intersection $\partial{D}_{m_{k}}\cap \partial{D}_{m_{k+1}}$ contains a \emph{flat} portion $\Sigma_{m_{k+1}}$ of size $r_0\slash 3$ such that
	$\Sigma_{m_{k+1}}\subset\Omega$ for $k=1,\dots,K-1$. 
		Finally, for each of these flat sub-portions $\Sigma_{m_{k+1}}$, $k=1,\dots , K-1$, there exist a point $P_{k+1}\in\Sigma_{m_{k+1}}$ and a rigid transformation of coordinates under which $P_{k+1}$ coincides with the origin	and
	\begin{eqnarray}
		\Sigma_{m_{k+1}}\cap{Q}_{r_{0}/3} &=&\left\{x\in
		Q_{r_0/3}\,:\,x_n=0\right\},\nonumber\\
		D_{m_k}\cap {Q}_{r_{0}/3} &=&\left\{x\in
		Q_{r_0/3}\,:\,x_n<0\right\},\nonumber\\
		D_{m_{k+1}}\cap {Q}_{r_{0}/3} &=&\left\{x\in
		Q_{r_0/3}\,:\,x_n>0\right\}.\nonumber
	\end{eqnarray}
	Later, we will add a domain $D_0\subset \mathbb{R}^n\setminus\overline{\Omega}$ so that, when indexing the chain of subdomains, we agree that
	$D_{m_0}=D_0$.
\end{enumerate}

\subsection{A-priori information on the anisotropic conductivity $\sigma$}\label{apriori sigma}
Our stability result for the Calder\'on inverse problem concerns a special family of anisotropic conductivities $\sigma$. Let us describe in details their form. The conductivities $\sigma(x)=\{\sigma_{ij}(x)\}$ are real-valued, symmetric $n\times n$ matrices such that $\sigma\in L^{\infty}(\Omega,Sym_n)$ and have the form 
\begin{subequations}
	\begin{eqnarray}\label{apriorisigma}
	&&\sigma(x)=\gamma(x) A(x)\quad\label{conductivity 1}\\
	&&\gamma(x)=\sum_{m=1}^{N}\gamma_{m}(x)\chi_{D_m}(x),\quad
	\gamma_{m}(x)=s_m+S_m\cdot x,\quad
	\mbox{for any}\:x\in\Omega,\label{conductivity 2},
	\end{eqnarray}
\end{subequations}
where the scalars $s_m\in\mathbb{R}$ and the vectors $S_m\in\mathbb{R}^n$, $m=1,\dots,N$ are the unknowns, $A(x)$ is a known fixed matrix and $D=\{D_m\}_{m=1}^N$ is the known partition of $\Omega$ introduced in Section \ref{subsec assumption
	domain}. Furthermore,
\begin{itemize}
	\item[a)] the scalar functions $\gamma_{m}$ are bounded, piecewise linear and there is a positive constant $\bar{\gamma}>1$ such that
	\begin{eqnarray}\label{apriorigamma}
	\bar{\gamma}^{-1}\le \gamma_m(x)\le  \bar{\gamma} , \qquad &&\mbox{for
		any}\:m=1,\dots N,\,\,\,\mbox{for
		any}\:x\in \Omega;
	\end{eqnarray}
	\item[b)] the matrix $A(x)$ satisfies the following Lipschitz continuity condition: there exists a constant $\bar{A}>0$ such that $\|A\|_{\mathcal{C}^{0,1}(\Omega)}\le \bar{A}$;
	
	\item[c)] The matrix $\sigma$ is positive definite and there exists a constant $\lambda>1$ such that
	\begin{eqnarray}\label{ellitticita'sigma}
	\lambda^{-1}\vert\xi\vert^{2}\leq A(x)\,\xi\cdot\xi\leq\lambda\vert\xi\vert^{2},\qquad 
	&&\mbox{for a.e.}\:x\in\Omega,\,\,\,\mbox{for every}\:\xi\in\mathbb{R}^{n}.
	\end{eqnarray}
\end{itemize}

\begin{definition}
	The set of positive constants $\{N, r_0, L, \lambda, \bar{\gamma}, \bar{A}, n\}$ with $N\in\mathbb{N}$ and the space dimension $n\geq 3$, is called the \textit{a-priori data}.  
\end{definition}
In the paper several constants depending on the \textit{a-priori data} will appear. In order to simplify our notation, we will denote them by $C, C_1,C_2\dots$, avoiding in most cases to point out their specific dependence on the a priori data which may vary from case to case.

%


\subsection{The local Dirichlet-to-Neumann map}\label{localDN}
By now, assume simply that $\Omega$ is a bounded domain with $\p\Omega$ of Lipschitz class. Since Dirichlet data are different from zero on a small portion $\Sigma\subset \p\Omega$, we introduce a suitable trace space for the formulation of the local Dirichlet-to-Neumann map.
\begin{definition}
	Let $\Sigma$ be a non-empty (flat) open portion of $\p\Omega$. The subspace of $H^{1\slash 2}(\partial\Omega)$ of trace functions which are compactly supported in $\Sigma$ is defined as
	\begin{equation}\label{Hco}
	H^{1\slash 2}_{co}(\Sigma)=\left\{f\in
	H^{1\slash 2}(\p\Omega)\,:\,\textnormal{supp}
	\:f\subset\Sigma\right\}.
	\end{equation}
	The trace space $H^{1\slash 2}_{00}(\Sigma)$ is the closure of $H^{1\slash 2}_{co}(\Sigma)$ with respect to the $H^{1\slash2}(\p\Omega)$-norm. We denote by $H^{-1\slash 2}_{00}(\Sigma)$ the dual of the trace space $H^{1\slash 2}_{00}(\p\Omega)$. 
\end{definition}

\begin{definition}\label{DN}
	The local Dirichlet-to-Neumann (DN) map associated with $\sigma$ and $\Sigma$ is the operator
	\begin{align}\label{mappaDN}
	\Lambda^{\Sigma}_{\sigma}\,:\,H^{1\slash 2}_{00}(&\Sigma)\,\rightarrow \,H^{-1\slash 2}_{00}(\Sigma)\\
	&g\,\,\,\, \mapsto\,\, \sigma\,\nabla u\cdot \nu\big|_{\Sigma},\nonumber
	\end{align}
	where $\nu$ is the unit outward normal of $\p \Omega$ and $u\in{H}^{1}(\Omega)$ is the weak solution to the boundary value problem
	\begin{displaymath}
	\left\{ \begin{array}{ll} \mbox{div}\left(\sigma(\cdot)\nabla u\right)=0, &
	\textrm{$\textnormal{in}\quad\Omega$},\\
	u=g, & \textrm{$\textnormal{on}\quad{\partial\Omega}.$}
	\end{array} \right.
	\end{displaymath}
	
	The map \eqref{mappaDN} can be identified with the bilinear form $H^{1\slash 2}_{00}(\Sigma)\times H^{1\slash 2}_{00}(\Sigma)\rightarrow \R$ defined by
	\begin{equation}\label{def DN locale}
	\la\Lambda_{\sigma}^{\Sigma}\:g,\:\eta\ra\:=\:\int_{\:\Omega}
	\sigma(x)\: \nabla{u}(x)\cdot\nabla\varphi(x)\:dx,
	\end{equation}
	where $\eta\in H^{1\slash 2}_{00}(\Sigma)$ and $\varphi\in H^{1}(\Omega)$ is any function such that $\varphi\vert_{\Sigma}=\eta$. In \eqref{def DN locale} the bracket $\la\cdot,\:\cdot\ra$ denotes the $L^{2}(\partial\Omega)$-pairing
	between $H^{1\slash 2}_{00}(\Sigma)$ and its dual
	$H^{-1\slash 2}_{00}(\Sigma)$.
\end{definition}
For simplicity, we will denote by
$\parallel\cdot\parallel_{*}$ the $\mathcal{L}(H^{1\slash 2}_{00}(\Sigma),H^{-1\slash 2}_{00}(\Sigma))$-norm of the Banach space of
bounded linear operators from $H^{1\slash 2}_{00}(\Sigma)$ to $H^{-1\slash 2}_{00}(\Sigma)$.


\subsection{Misfit functional}\label{misfitsec}
To begin with, we introduce the Green function $G$ in an augmented domain $\widetilde\Omega$ as follows. From the assumptions on the domain $\Omega$ (Section \ref{subsec assumption domain}) there is a point $P_1\in \Sigma$ that coincides with the origin, up to a rigid transformation of coordinates. For simplicity, let us assume that the locally flat portion $\Sigma_1$ coincides with the entire portion $\Sigma$. Let us define the domain $D_0\subset \R^n\setminus \overline{\Omega}$ as
\begin{equation}\label{dtilde}
D_0=\left\{x\in(\mathbb{R}^n\setminus\overline{\Omega})\cap B_{r_0}\:\bigg|\:|x_i|<\frac{r_0}{3},\:i=1,\dots , n-1,\:\:-\frac{r_0}{3}<x_n<0\right\},
\end{equation}
and such that
\[
\p D_0\cap\p\Omega\,\,\subset\subset\, \Sigma.
\]
We define the augmented domain $\widetilde{\Omega}$ as the set 
\begin{equation}\label{aumenteddomain}
\widetilde\Omega\,\,=\,\,\overset{\circ}{\overline{\Omega\cup D_0}}.
\end{equation}
It turns out that $\widetilde\Omega$ is of Lipschitz class with constants $\frac{r_0}{3}$ and
$\tilde{L}$, where $\tilde{L}$ depends on $L$ only.

Denote 
$$(D_0)_r = \left\{\,x\in D_0\::\:\mbox{dist}(x,\p D_0)>r\,\,\right\}, \qquad r\in \left(0,\frac{r_0}{6}\right).$$
Finally, we introduce two sets contained in $D_0$: the sets $D_y$ and $D_z$ which are compactely supported in $D_0$, i.e. $D_y,D_z\subset\subset D_0$. In the following sections, we might identify these sets with the set $(D_0)_r$, but in general, thay can be freely chosen in $D_0$.

Consider two anisotropic conductivities $\sigma^{(i)}$, $i=1,2$ as in Section \ref{apriori sigma}. Without loss of generality, we can extend them to the augmented domain $\widetilde{\Omega}$ by setting their value equal to the identity matrix on $D_0$, so that they are of the form
\begin{displaymath}
\begin{array}{ll}
\sigma^{(i)}(x)=\gamma^{(i)}(x)A(x), &\textrm{$\textnormal{for any }x\in\Omega$},\\
\sigma^{(i)}|_{D_0}=I, &\gamma^{(i)}|_{D_0}=1.
\end{array}
\end{displaymath}
We denote with the same symbol $\sigma$ the extended conductivity.

For every $y\in D_0$, the Green's function $G_i(\cdot,y)$ associated to $L_i = \mbox{div}(\sigma^{(i)}(\cdot)\nabla\cdot)$ and $\widetilde\Omega$ with pole $y$, is the weak solution to the Dirichlet problem

\begin{equation}\label{greensystem}
\left\{ \begin{array}{ll}
\mbox{div}(\sigma^{(i)}(\cdot)\nabla G_i(\cdot,y))=-\delta(\cdot-y) &\textrm{$\textnormal{in}\quad\widetilde\Omega$},\\
G_i(\cdot,y)=0 &\textrm{$\textnormal{on}\quad\p\widetilde\Omega$},
\end{array}
\right.
\end{equation}
where $\delta(\cdot - y)$ is the Dirac distribution centred at $y$.

We recall the following properties for the Green's functions (see \cite{Lit-St-W}):
\begin{equation*}
G(x,y)=G(y,x),\qquad \forall x\neq y,
\end{equation*}
and
\begin{equation}\label{boundgreen}
0<G(x,y)<C |x-y|^{2-n},\qquad \forall x\neq y.
\end{equation}
For $(y,z)\in D_y\times D_z$, define the following surface integral
\begin{equation}\label{misfit1}
S_{\mathcal{U}_0}(y,z) = \int_{\Sigma} \left[ G_2(x,z)\,\sigma^{(1)}(x)\nabla G_1(x,y)\cdot\nu - G_1(x,y)\,\sigma^{(2)}(x)\nabla G_2(x,z)\cdot\nu\right]\:dS(x).
\end{equation}
We define the \textit{misfit functional} as the quantity
\begin{equation}\label{misfit2}
\mathcal{J}(\sigma^{(1)},\sigma^{(2)})=\int_{D_y\times D_z} \left|S_{\mathcal{U}_0}(y,z)\right|^2dy\, dz.
\end{equation}


\subsection{Stability estimate}\label{mainresultsec}
In previous works (see \cite{A-dH-G-S}, \cite{A-V},\cite{G-S}), Lipschitz stability estimates have been established for piecewise constant and piecewise linear isotropic conductivities and a certain class of anisotropic conductivities respectively, in terms of the local Dirichlet-to-Neumann map. Here, we extend these results to the class of anisotropic conductivities defined in Section \ref{apriori sigma}. First, we determine a bound to the $L^{\infty}$-norm of the difference between two anisotropic conductivities in terms of the square root of the misfit functional introduced above. Then, we derive a Lipschitz stability result in terms of the local D-N map.
\begin{theorem}\label{teorema principale}
	Let $\Omega$ be a bounded domain as in assumptions \ref{subsec assumption domain}.
	Let $\sigma^{(1)}$ and $\sigma^{(2)}$ be two anisotropic conductivities as in assumptions \ref{apriori sigma}, i.e. of the form
	\begin{equation}\label{a priori info su sigma}
		\sigma^{(i)}(x)=\sum_{m=1}^{N}\gamma_{m}^{(i)}(x)\chi_{D_m}(x)A(x),\quad\quad\mbox{for any}\:x\in\Omega,\,\,\:i=1,2,	
	\end{equation}
	where $D=\{D_m\}_{m=1}^N$ is the chain of subdomains as in assumptions \ref{subsec assumption domain}, $A(x)$ is the known Lipschitz matrix and $\gamma_{m}^{(i)}(x)$ are the piecewise-affine functions given by the formula
	\begin{equation*}
	\gamma_{m}^{(i)}(x)=s^{(i)}_m+S^{(i)}_m\cdot x,\qquad x\in D_m,
	\end{equation*}
	for $s^{(i)}_m\in\mathbb{R}$ and $S^{(i)}_m\in\mathbb{R}^n$. 
	Then there exists a positive constant $C$ such that 
	
	\begin{equation}\label{stabilita' globale}
	\|\sigma^{(1)}-\sigma^{(2)}\|_{L^{\infty}(\Omega)}\leq C
	\left(\mathcal{J}(\sigma^{(1)},\sigma^{(2)})\right)^{1\slash 2},
	\end{equation}
	where $C$ depends on the a priori data only.
\end{theorem}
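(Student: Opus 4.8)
\medskip
\noindent\textbf{Proof plan.} The idea is to convert the surface integral $S_{\mathcal{U}_0}(y,z)$ into an interior integral against $\sigma^{(1)}-\sigma^{(2)}$, to reduce the estimate to a control of the finitely many coefficients of the affine pieces of $\gamma^{(1)}-\gamma^{(2)}$, and to recover those coefficients one subdomain at a time along the chain $D_0,D_1,\dots,D_N$, by means of the asymptotic behaviour of the Green functions near the flat interfaces (Proposition \ref{teorema stime asintotiche}) and of propagation of smallness (Proposition \ref{proposizione unique continuation finale}).

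\emph{Step 1: an Alessandrini-type identity.} Since the poles $y,z$ belong to $D_0\subset\widetilde\Omega\setminus\overline\Omega$, each $G_i(\cdot,y)$ is a weak solution of the homogeneous equation $\mbox{div}(\sigma^{(i)}\nabla\cdot)=0$ in $\Omega$ and vanishes on $\p\widetilde\Omega$, hence on $\p\Omega\setminus\Sigma$. Applying Green's second identity for $L_1$ and $L_2$ on $\Omega$ (cf.\ \cite{A1}) one obtains
\begin{equation}\label{Alessandrini identity}
S_{\mathcal{U}_0}(y,z)=\int_{\Omega}\left(\sigma^{(1)}(x)-\sigma^{(2)}(x)\right)\nabla_x G_1(x,y)\cdot\nabla_x G_2(x,z)\,dx,\qquad(y,z)\in D_0\times D_0.
\end{equation}
In particular $S_{\mathcal{U}_0}$ is smooth on $D_0\times D_0$, it is harmonic in $y$ and in $z$ separately there (where $\sigma^{(i)}\equiv I$ and the poles stay off $\Sigma$), and by \eqref{boundgreen} together with standard trace estimates $\|S_{\mathcal{U}_0}\|_{L^\infty(D_0\times D_0)}\le C$, with $C$ depending only on the a priori data.

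\emph{Step 2: reduction to finitely many parameters.} Writing $\delta\gamma_m=\gamma_m^{(1)}-\gamma_m^{(2)}$, $\delta s_m=s_m^{(1)}-s_m^{(2)}$ and $\delta S_m=S_m^{(1)}-S_m^{(2)}$, on $D_m$ we have $\delta\gamma_m(x)=\delta s_m+\delta S_m\cdot x$ and $(\sigma^{(1)}-\sigma^{(2)})(x)=\delta\gamma_m(x)A(x)$; hence, by assumption $(b)$ of Section \ref{apriori sigma} and \eqref{misura Omega},
\begin{equation*}
\|\sigma^{(1)}-\sigma^{(2)}\|_{L^\infty(\Omega)}\le C\max_{m=1,\dots,N}\left(|\delta s_m|+r_0\,|\delta S_m|\right),
\end{equation*}
so it suffices to bound each $|\delta s_m|$ and $r_0|\delta S_m|$ by $C\,\mathcal{J}(\sigma^{(1)},\sigma^{(2)})^{1/2}$. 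Moreover, by Step 1 and interior estimates for harmonic functions, the $L^2$ smallness of $S_{\mathcal{U}_0}$ on $D_y\times D_z$ measured by $\mathcal{J}$ upgrades to
\begin{equation}\label{Linfty bound misfit}
\|S_{\mathcal{U}_0}\|_{L^\infty(K_y\times K_z)}\le C\,\mathcal{J}(\sigma^{(1)},\sigma^{(2)})^{1/2}\qquad\mbox{for every }K_y\subset\subset D_y,\ K_z\subset\subset D_z.
\end{equation}

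\emph{Step 3: singular solutions at a flat interface.} The local step is as follows. Fix a flat interface, to start with $\Sigma_1\subset\p D_0\cap\p D_1$, a point $x_0\in\Sigma_1$, and poles $y,z$ approaching $x_0$ from the $D_0$ side along the inner normal at a fixed distance comparable to $r_0$ (as $D_y,D_z$ may be freely chosen in $D_0$, we may assume they reach within such a distance of $\Sigma_1$, so that \eqref{Linfty bound misfit} applies to these poles). By Proposition \ref{teorema stime asintotiche}, $\nabla_x G_i(x,y)$ is, to leading order near the pole, the gradient of the fundamental solution of the operator with coefficients frozen at $x_0$; inserting the corresponding expansions into \eqref{Alessandrini identity}, and using that $\sigma^{(1)}-\sigma^{(2)}$ vanishes on the $D_0$ side of $\Sigma_1$ and equals $\delta\gamma_1(x)A(x)$ on the $D_1$ side, one obtains an expansion of $S_{\mathcal{U}_0}(y,z)$ whose first two terms are a-priori nondegenerate multiples of the jump $\delta\gamma_1(x_0)$ and of the normal derivative $\delta S_1\cdot\nu$, the remainder being bounded by $\|\sigma^{(1)}-\sigma^{(2)}\|_{L^\infty(\Omega)}$ times a factor that can be made small by choosing the fixed pole-to-interface distance small. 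Letting $x_0$ vary over the $(n-1)$-dimensional flat piece $\Sigma_1$ then recovers $\delta s_1$ and the tangential part of $\delta S_1$ from the values of $\delta\gamma_1$ on $\Sigma_1$, and the normal component of $\delta S_1$ from the normal-derivative term, giving
\begin{equation*}
|\delta s_1|+r_0|\delta S_1|\le C\,\mathcal{J}(\sigma^{(1)},\sigma^{(2)})^{1/2}+\varepsilon\,\|\sigma^{(1)}-\sigma^{(2)}\|_{L^\infty(\Omega)},
\end{equation*}
where $\varepsilon$ can be taken arbitrarily small at the price of enlarging $C$.

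\emph{Step 4: iteration along the chain, and conclusion.} For $k=1,\dots,K-1$, assuming $\delta\gamma_{m_1},\dots,\delta\gamma_{m_k}$ already estimated, I would repeat Step 3 at $\Sigma_{m_{k+1}}\subset\p D_{m_k}\cap\p D_{m_{k+1}}$, now with the poles on the $D_{m_k}$ side at a fixed small distance from the interface. As these poles lie in $\Omega$ rather than in $D_0$, the control \eqref{Linfty bound misfit} is first transported to them by the propagation of smallness of Proposition \ref{proposizione unique continuation finale}, iterated along the flat interfaces of the chain furnished by assumption $(3)$ of Section \ref{subsec assumption domain}, with constants depending only on the a priori data. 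In the ensuing expansion of $S_{\mathcal{U}_0}$, the contribution of $D_{m_k}$ near $x_0\in\Sigma_{m_{k+1}}$ is governed by $\delta\gamma_{m_k}(x_0)$ and $\delta S_{m_k}\cdot\nu$, already under control and hence absorbable into the error term, while the leading terms encode $\delta\gamma_{m_{k+1}}(x_0)$ and $\delta S_{m_{k+1}}\cdot\nu$; letting $x_0$ vary over $\Sigma_{m_{k+1}}$ yields
\begin{equation*}
|\delta s_{m_{k+1}}|+r_0|\delta S_{m_{k+1}}|\le C\,\mathcal{J}(\sigma^{(1)},\sigma^{(2)})^{1/2}+C\sum_{j\le k}\left(|\delta s_{m_j}|+r_0|\delta S_{m_j}|\right)+\varepsilon\,\|\sigma^{(1)}-\sigma^{(2)}\|_{L^\infty(\Omega)}.
\end{equation*}
Iterating these estimates along all chains (every $D_m$ occurs in one, by assumption $(3)$), summing, and absorbing the term $\varepsilon\,\|\sigma^{(1)}-\sigma^{(2)}\|_{L^\infty(\Omega)}$ into the left-hand side for $\varepsilon$ small enough, gives $\max_m\left(|\delta s_m|+r_0|\delta S_m|\right)\le C\,\mathcal{J}(\sigma^{(1)},\sigma^{(2)})^{1/2}$ with $C$ depending only on the a priori data (and necessarily growing with $N$, cf.\ \cite{R}); together with Step 2 this is \eqref{stabilita' globale}.

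\emph{Where the difficulty lies.} The main obstacle, and the genuine novelty with respect to the piecewise constant case of \cite{G-S}, is Step 3: one must carry the Green-function asymptotics of Proposition \ref{teorema stime asintotiche} one order further, so as to read off not merely the value of $\delta\gamma_m$ at an interface point but also its (constant) gradient $\delta S_m$, while keeping the remainder uniformly controlled in terms of the a priori data; and, in Step 4, the propagation of smallness of Proposition \ref{proposizione unique continuation finale} has to be run along the entire chain of subdomains with explicit dependence of the constants on the a priori data, which is precisely what produces the (unavoidable, by \cite{R}) growth of $C$ with $N$.
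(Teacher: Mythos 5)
Your overall skeleton (Alessandrini-type identity for $S_{\mathcal{U}_0}$, upgrade of the $L^2$ misfit bound to an $L^\infty$ bound on the poles, reduction of each affine piece to its value and normal derivative at a flat interface point, recovery of these via the Green-function asymptotics of Proposition \ref{teorema stime asintotiche}, and transport of smallness along the chain via Proposition \ref{proposizione unique continuation finale}) is exactly the paper's strategy. The gap is in the quantitative form of the local estimates in Steps 3--4 and, consequently, in your concluding absorption argument. You claim each interface yields
$|\delta s_{m_{k+1}}|+r_0|\delta S_{m_{k+1}}|\le C\,\mathcal{J}^{1/2}+C\sum_{j\le k}(\cdots)+\varepsilon\,\|\sigma^{(1)}-\sigma^{(2)}\|_{L^\infty(\Omega)}$ with $\varepsilon$ arbitrarily small, and then absorb the last term linearly. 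This linear-in-$\mathcal{J}^{1/2}$ structure is not what the method delivers beyond the very first interface. Already for the normal derivative at $P_1$ the competition between the singular term $\varepsilon_0\tau^{-1}$ and the remainder $E\tau^{\theta_2}$ forces an optimization in the pole distance $\tau$ and produces only a H\"older bound $C\varepsilon_0^{\theta_2/(1+\theta_2)}(E+\varepsilon_0)^{1/(1+\theta_2)}$, which cannot be written as $C\varepsilon_0+\varepsilon E$ with both constants controlled. Worse, at interior interfaces the control of $S_{\mathcal{U}_k}$ at poles deep inside $\Omega$ comes from quantitative unique continuation across the already-traversed interfaces, and this intrinsically degrades the modulus: one obtains bounds of the form
\begin{equation*}
C^{\bar h}(\varepsilon_0+E)\Bigl(\omega_{1/C}^{(2k)}\bigl(\tfrac{\varepsilon_0}{\varepsilon_0+E}\bigr)\Bigr)^{(1/C)^{\bar h}},
\end{equation*}
i.e.\ iterated-logarithmic in $\varepsilon_0/(\varepsilon_0+E)$, never linear in $\varepsilon_0$. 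No choice of pole distance makes the $E$-contribution an arbitrarily small multiple of $E$ while keeping the $\varepsilon_0$-contribution linear.

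The correct closing argument is therefore nonlinear. Taking $E=\|\gamma^{(1)}_K-\gamma^{(2)}_K\|_{L^\infty(D_K)}$ on the last subdomain of the chain, the accumulated interface estimates give
\begin{equation*}
E+\varepsilon_0\le C(\varepsilon_0+E)\Bigl(\omega_{1/C}^{(2K)}\bigl(\tfrac{\varepsilon_0}{\varepsilon_0+E}\bigr)\Bigr)^{1/C},
\end{equation*}
and one concludes $E\le C\varepsilon_0$ not by absorbing a small linear term but by dividing by $E+\varepsilon_0$ and using that $\omega_{1/C}^{(2K)}(t)\to 0$ as $t\to 0^+$: the inequality forces $\varepsilon_0/(\varepsilon_0+E)$ to stay above a constant depending only on the a-priori data. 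Without replacing your linear recursion and summation by this bootstrap, the proof does not close. (Two smaller remarks: the paper runs the argument only along the single chain reaching the subdomain where the maximum of $|\gamma^{(1)}-\gamma^{(2)}|$ is attained, rather than over all chains; and the value and the normal derivative at an interface require two separate identities, one for $G_1,G_2$ and one for $\partial_{y_n}G_1,\partial_{z_n}G_2$, with the second-order asymptotics \eqref{asydernorgrad} --- which you do anticipate in your closing paragraph.)
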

From this result, it follows a Lipschitz stability estimate in terms of the local D-N maps.
\begin{corollary}\label{corollary}
	Assume that the hypothesis of Theorem \ref{teorema principale} hold, then
	\begin{equation}\label{corollaryequation}
	\|\sigma^{(1)}-\sigma^{(2)}\|_{L^{\infty}(\Omega)}\leq C \|\Lambda_{\sigma^{(1)}}^{\Sigma}-\Lambda_{\sigma^{(2)}}^{\Sigma}\|_{*},
	\end{equation}
	where $C>0$ is a constant depending on the a-priori data only.
\end{corollary}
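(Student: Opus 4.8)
The plan is to estimate the misfit functional $\mathcal{J}(\sigma^{(1)},\sigma^{(2)})$ from above by $\|\Lambda^{\Sigma}_{\sigma^{(1)}}-\Lambda^{\Sigma}_{\sigma^{(2)}}\|_{*}^{2}$ and then to invoke the H\"older estimate \eqref{stabilita' globale} of Theorem \ref{teorema principale}. The first, and main, step is to rewrite the kernel $S_{\mathcal{U}_0}(y,z)$ of the misfit functional in terms of the local D-N maps. Fix $(y,z)\in D_y\times D_z$. Since $y,z\in D_0\subset\R^n\setminus\overline\Omega$, the restrictions of $G_1(\cdot,y)$ and $G_2(\cdot,z)$ to $\Omega$ are, by \eqref{greensystem}, $H^1(\Omega)$ weak solutions of $\mathrm{div}(\sigma^{(1)}\nabla\cdot)=0$ and $\mathrm{div}(\sigma^{(2)}\nabla\cdot)=0$ in $\Omega$; moreover, since $G_i$ vanishes on $\p\widetilde\Omega$ and $\p D_0\cap\p\Omega\subset\subset\Sigma$ by the construction of $\widetilde\Omega$, the traces $g_1:=G_1(\cdot,y)\big|_{\p\Omega}$ and $g_2:=G_2(\cdot,z)\big|_{\p\Omega}$ are compactly supported in $\Sigma$, hence lie in $H^{1\slash2}_{00}(\Sigma)$. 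Applying Green's identity in \eqref{misfit1}, using the equations satisfied by $G_1(\cdot,y)$ and $G_2(\cdot,z)$ in $\Omega$ and the vanishing of $g_1,g_2$ on $\p\Omega\setminus\Sigma$, I would obtain
\[
S_{\mathcal{U}_0}(y,z)=\int_{\Omega}\big(\sigma^{(1)}-\sigma^{(2)}\big)(x)\,\nabla G_1(x,y)\cdot\nabla G_2(x,z)\,dx ,
\]
which, by Alessandrini's identity \cite{A1} and the definition \eqref{def DN locale} of the bilinear form associated with the local D-N map (together with its symmetry), equals $\la(\Lambda^{\Sigma}_{\sigma^{(1)}}-\Lambda^{\Sigma}_{\sigma^{(2)}})g_1,\,g_2\ra$. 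By definition of $\|\cdot\|_{*}$ this gives
\[
\big|S_{\mathcal{U}_0}(y,z)\big|\le\|\Lambda^{\Sigma}_{\sigma^{(1)}}-\Lambda^{\Sigma}_{\sigma^{(2)}}\|_{*}\,\|g_1\|_{H^{1\slash2}_{00}(\Sigma)}\,\|g_2\|_{H^{1\slash2}_{00}(\Sigma)} .
\]

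Next I would produce a uniform bound $\|g_i\|_{H^{1\slash2}_{00}(\Sigma)}\le C$, with $C$ depending only on the a-priori data, valid for every admissible pole. Since $D_y,D_z\subset\subset D_0$ and $\overline{D_0}$ is disjoint from $\Omega$, the poles stay at a distance from $\p\Omega$ bounded below by a constant of the a-priori data (for instance a fixed fraction of $r_0$ when $D_y=D_z=(D_0)_r$). Combining the pointwise Green function bound \eqref{boundgreen}, Caccioppoli's inequality and the standard boundary $H^1$-regularity estimate near $\Sigma$ for the uniformly elliptic operators $\mathrm{div}(\sigma^{(i)}\nabla\cdot)$ on the Lipschitz domain $\Omega$, together with the trace theorem, one controls $\|G_i(\cdot,w)|_{\p\Omega}\|_{H^{1\slash2}(\p\Omega)}$ uniformly for $w$ ranging in $D_y$ or $D_z$; since these traces are supported in a compact subset of $\Sigma$ fixed by the a-priori data, the same bound controls the $H^{1\slash2}_{00}(\Sigma)$-norm. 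Hence $|S_{\mathcal{U}_0}(y,z)|^{2}\le C\,\|\Lambda^{\Sigma}_{\sigma^{(1)}}-\Lambda^{\Sigma}_{\sigma^{(2)}}\|_{*}^{2}$ uniformly in $(y,z)\in D_y\times D_z$, and integrating over $D_y\times D_z$, using $|D_y|,|D_z|\le|\Omega|\le N r_0^{n}$, yields $\mathcal{J}(\sigma^{(1)},\sigma^{(2)})\le C\,\|\Lambda^{\Sigma}_{\sigma^{(1)}}-\Lambda^{\Sigma}_{\sigma^{(2)}}\|_{*}^{2}$. Plugging this into \eqref{stabilita' globale} immediately gives \eqref{corollaryequation}.

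The step I expect to require the most care is the first one: one must verify that the conormal derivative $\sigma^{(i)}\nabla G_i(\cdot,\cdot)\cdot\nu\big|_{\Sigma}$ is a well-defined element of $H^{-1\slash2}_{00}(\Sigma)$, that the surface integral in \eqref{misfit1} genuinely coincides with the duality pairing $\la\cdot,\cdot\ra$ between $H^{1\slash2}_{00}(\Sigma)$ and $H^{-1\slash2}_{00}(\Sigma)$, and that the traces $g_1,g_2$ are indeed compactly supported in $\Sigma$ — this is precisely where the geometric construction of $D_0$ and $\widetilde\Omega$ is exploited. The other delicate point is the uniform trace bound of the second step, which is where the a-priori data enter the constant $C$; everything else is a routine integration.
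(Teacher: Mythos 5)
Your proposal is correct and follows essentially the same route as the paper: both pass from the surface integral $S_{\mathcal{U}_0}(y,z)$ to the volume integral via Green's formula, identify it with the duality pairing $\la(\Lambda_1-\Lambda_2)G_1(\cdot,y)\big|_{\partial\Omega},G_2(\cdot,z)\big|_{\partial\Omega}\ra$ by Alessandrini's identity, bound it by $\|\Lambda_1-\Lambda_2\|_{*}$ times the trace norms, integrate over $D_y\times D_z$, and conclude with Theorem \ref{teorema principale}. You are in fact more explicit than the paper about the one point its proof leaves implicit, namely the uniform $H^{1\slash2}_{00}(\Sigma)$ bound on the traces of the Green's functions for poles ranging in $D_y$, $D_z$, which is exactly where the constant's dependence on the a-priori data is absorbed.
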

%

\begin{remark}
	From now on, as we deal with two different anisotropic conductivities $\sigma^{(i)}$, $i=1,2$, we will simply denote with the symbol $\Lambda_i$ the local DN map $\Lambda^{\Sigma}_{\sigma^{(i)}}$.
\end{remark}

\section{Proof of the main result}\label{sec3}
The proof of Theorem \ref{teorema principale} is based on an argument that combines asymptotic estimates for the Green's function of the elliptic operator $\mbox{div}(\sigma(\cdot)\nabla\cdot)$ (Proposition \ref{teorema stime asintotiche}), together with a result of unique continuation (Proposition \ref{proposizione unique continuation finale}). In this section we introduce these technical results (proved in Section \ref{technical}), then we prove Theorem \ref{teorema principale} and Corollary \ref{corollary}.
 
\subsection{Technical tools}

\subsubsection{\normalsize{Behaviour of Green's function near interfaces}}\label{subsection Green function}



We shall denote with
\begin{eqnarray}\label{fondamentalsol}
\Gamma(x,y)=\frac{1}{n(2-n)\omega_n}|x-y|^{2-n},\quad &&\omega_n= \frac{2\:\pi^{n\slash 2}}{n\:\Gamma(n\slash 2)},
\end{eqnarray}
the fundamental solution for the Laplace operator (here $\omega _n$ denotes the volume of the unit ball in $\mathbb{R}^n$).

Let $\{D_m\}_{m=0}^K$, $K\in\{1,\dots,N\}$ be the chain of subdomains as in assumptions \ref{subsec assumption domain}, $\{\Sigma_m\}_{m=1}^K$ be the corresponding sequence of flat portions with special points $P_1,\dots,P_K$. Moreover, let $\nu(P_{m+1})$ denotes the unit normal to $\p D_{m}$ at the point $P_{m+1}$ pointing outside $D_m$.
\begin{proposition}\label{teorema stime asintotiche}({Asymptotic estimates})
	Fix an index $m\in\{0, \dots, K-1\}$, then there exist constants $\alpha, \theta_1, \theta_2, 0<\alpha,\theta_1,\theta_2<1$ and $C_1,C_2,C_3>0$ depending on the a priori data only and a suitable constant $C_4>1$ such that the following inequalities hold true for every $x\in B_{\frac{r_0}{C_4}}(P_{m+1})\cap
	D_{m+1}$ and every $y=P_{m+1}-r \nu(P_{m+1})$, where $r\in
	(0,\frac{r_0}{C_4})$ 
	
	\begin{eqnarray}
	&&\left|G(x,y) -
	\frac{2}{\gamma_{m}(P_{m+1})+\gamma_{m+1}(P_{m+1})}\Gamma(Jx,Jy)\right| \le  {C_1}|x-y|^{3-n-\alpha} \label{asyfun},\\
	&&\left|\nabla_x G(x,y) -
	\frac{2}{\gamma_{m}(P_{m+1})+\gamma_{m+1}(P_{m+1})}\nabla_x\Gamma(Jx,Jy)\right| \le  {C_2}|x-y|^{1-n +\theta_1}\label{asydernor},\\
	&&\left|\nabla_y\nabla_xG(x,y) -
	\frac{2}{\gamma_{m}(P_{m+1})+\gamma_{m+1}(P_{m+1})}\nabla_y\nabla_x\Gamma(Jx,Jy)\right| \le  {C_3}|x-y|^{-n+\theta_2} \label{asydernorgrad}\ .
	\end{eqnarray}
	where $J$ is the positive definite matrix
	$J=\sqrt{A(P_{m+1})^{-1}}$.
\end{proposition}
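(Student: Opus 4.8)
The estimates concern the behaviour of the Green's function $G(\cdot,y)$ for $\mathrm{div}(\sigma(\cdot)\nabla\cdot)$ across a flat interface $\Sigma_{m+1}$ separating $D_m$ from $D_{m+1}$, where on each side $\sigma=\gamma A$ with $\gamma$ affine and $A$ Lipschitz. The guiding idea is a \emph{freezing-the-coefficients} argument: near the special point $P_{m+1}$, $A(x)\approx A(P_{m+1})$ and $\gamma_j(x)\approx\gamma_j(P_{m+1})$ ($j=m,m+1$), so $G$ should be well approximated by the Green's function of the piecewise-constant operator with conductivity $\gamma_m(P_{m+1})A(P_{m+1})$ on one side and $\gamma_{m+1}(P_{m+1})A(P_{m+1})$ on the other. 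The change of variables $x\mapsto Jx$, $J=\sqrt{A(P_{m+1})^{-1}}$, transforms this operator into the transmission problem for the Laplacian with the two constant scalar values $\gamma_m(P_{m+1})$, $\gamma_{m+1}(P_{m+1})$, whose Green's function across a hyperplane is classically computable by the method of images; its singular part is $\frac{2}{\gamma_m(P_{m+1})+\gamma_{m+1}(P_{m+1})}\Gamma(Jx,Jy)$, which is exactly the leading term appearing in \eqref{asyfun}. So I would first record this explicit computation for the model problem, then estimate the correction.

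\textbf{Key steps.} First I would set up local coordinates at $P_{m+1}$ so that $\Sigma_{m+1}$ lies in $\{x_n=0\}$, $D_m$ in $\{x_n<0\}$, $D_{m+1}$ in $\{x_n>0\}$, within a cylinder $Q_{r_0/3}$; then apply $J$ and note $J$ maps the half-space to a half-space (after a further rotation fixing the interface, which can be absorbed). Second, write $G=H+R$, where $H(x,y)=\frac{2}{\gamma_m(P_{m+1})+\gamma_{m+1}(P_{m+1})}\Gamma(Jx,Jy)$ is the model singular solution and $R$ is the remainder; then $\mathrm{div}(\sigma(\cdot)\nabla R)=-\,\mathrm{div}\big((\sigma(\cdot)-\sigma_0)\nabla H\big)$ in a neighbourhood of $P_{m+1}$, where $\sigma_0$ denotes the frozen piecewise-constant conductivity, because the Dirac masses cancel. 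Here the right-hand side is controlled using $|\sigma(x)-\sigma_0(x)|\le C|x-P_{m+1}|\le C(|x-y|+r)$ (Lipschitz continuity of $A$ and affine-ness of $\gamma$, each frozen at $P_{m+1}$) together with $|\nabla H(x,y)|\le C|x-y|^{1-n}$; this gives a source of size $\lesssim |x-y|^{2-n}$ plus a lower-order commutator term. Third, I would invoke interior/boundary Schauder- or $L^p$-type estimates — more precisely, the De Giorgi–Nash–Moser regularity and the known Hölder continuity of solutions and their gradients for such transmission problems across a flat interface — to convert this bound on the source into the claimed decay rates $|x-y|^{3-n-\alpha}$ for $R$, $|x-y|^{1-n+\theta_1}$ for $\nabla_x R$, and $|x-y|^{-n+\theta_2}$ for $\nabla_y\nabla_x R$, with the small exponents $\alpha,\theta_1,\theta_2$ coming from the Hölder exponents in these regularity estimates. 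A scaling (dyadic annulus) argument centred appropriately at $y$ or at $P_{m+1}$ would be used to track the precise powers.

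\textbf{Main obstacle.} The delicate point is the \emph{gradient} estimates \eqref{asydernor} and especially \eqref{asydernorgrad}: differentiating the Green's function worsens the singularity, and across the interface $\nabla G$ is only Hölder (not Lipschitz) even in the model case, so one cannot simply differentiate the source bound. The natural route is to differentiate in the pole variable $y$ — exploiting the symmetry $G(x,y)=G(y,x)$ and the fact that $y$ ranges over the \emph{interior} of $D_m$ at distance $r$ from the interface, hence $\nabla_y G$ solves the same type of equation in $x$ — and then to combine interior gradient estimates (away from $y$) with the already-established $C^{0,\theta}$ bound on $\nabla_x G$ near the interface, carefully iterating the scaling so the blow-up of constants as $x,y\to P_{m+1}$ is exactly compensated by the gain in the power of $|x-y|$. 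Managing the interplay of the two small scales $|x-y|$ and $r=|y-P_{m+1}|$, and ensuring the Hölder exponents $\theta_1,\theta_2$ obtained are uniform in the a priori data, is where most of the technical work will lie; this is carried out in detail in Section \ref{technical}.
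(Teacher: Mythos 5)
Your proposal follows essentially the same route as the paper: freeze the coefficients at $P_{m+1}$, pass to the variables $L\xi$ with $L=RJ$, take as leading term the method-of-images Green's function of the two-phase constant-coefficient transmission problem (whose value for poles on opposite sides of the interface is exactly $\frac{2}{\gamma_m(P_{m+1})+\gamma_{m+1}(P_{m+1})}\Gamma(J\cdot,J\cdot)$), write $R=G-H$ as a solution of $\mathrm{div}(\sigma\nabla R)=-\mathrm{div}((\sigma-\sigma_0)\nabla H)$, bound $R$ via the representation formula using $|\sigma-\sigma_0|\le C|\zeta|$, and obtain the derivative estimates from piecewise $C^{1,\alpha'}$ regularity for transmission problems (the paper's Theorem \ref{rego}, i.e.\ Li--Vogelius). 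The one point where the paper is more specific than your sketch of the hardest step: rather than differentiating in the pole variable and iterating scalings, it applies the Li--Vogelius H\"older-seminorm bound for $\nabla_\xi R$ (and $\nabla_\eta\partial_{\xi_k}R$) on cylinders of size comparable to $h=|\xi-\eta|$ and then uses an interpolation inequality of the form $\|\nabla R\|_{L^\infty}\le C\big(\|R\|_{L^\infty}^{\alpha'/(1+\alpha')}|\nabla R|_{\alpha'}^{1/(1+\alpha')}+h^{-1}\|R\|_{L^\infty}\big)$ to trade the gain in the zeroth-order bound $\|R\|_{L^\infty}\lesssim h^{3-n-\alpha}$ for the fractional gains $\theta_1$ and then $\theta_2$.
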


\subsubsection{{\normalsize Quantitative unique continuation}}\label{uniquecontinuation}

For any number $b>0$, define the concave,
non decreasing function $\omega_{b}(t)$ on $(0,+\infty)$ as
\begin{displaymath}
\omega_{b}(t)=\left\{ \begin{array}{ll} 2^{b}e^{-2}|\log t|^{-b},
&\quad
t\in (0,e^{-2}),\\
e^{-2}, &\quad t\in[e^{-2},+\infty)
\end{array} \right.
\end{displaymath}
We recall (see (4.34) and (4.35) in \cite{A-V}) that
\begin{eqnarray}\label{property1}
(0,+\infty)\ni t \rightarrow \ t\omega_{b}\left(\frac{1}{t}\right)\quad &&\mbox{is a non-decreasing function}
\end{eqnarray}
and for any $\beta\in(0,1)$ we have that
\begin{eqnarray}\label{property2}
\omega_{b}\left(\frac{t}{\beta}\right)\le |\log e\beta^{-1/2}|^b\omega_{b}(t)\ ,\quad &&\omega_{b}(t^{\beta})\le \left(\frac{1}{\beta} \right)^b\omega_{b}(t)\ .
\end{eqnarray}

Furthermore, we shall denote the iterative compositions of $\omega$ as
\begin{equation*}
\omega_{b}^{(1)}=\omega_b\ ,\qquad \omega_{b}^{(j)}=\omega_{b}\circ \omega_{b}^{(j-1)}\ \ j=2,3, \dots
\end{equation*}
and we set $\omega_{b}^{(0)}(t)=t^{b}$ for $0<b<1$. 

Fix a chain of subdomains $\{D_{m}\}_{m=0}^K$ as in assumptions \ref{subsec assumption domain} for the domain $\widetilde{\Omega}$. Set
\begin{equation}\label{Wk}
\mathcal{W}_k = \bigcup_{m=0}^{k}D_{m},\qquad \mathcal{U}_k = \widetilde\Omega\setminus\overline{\mathcal{W}_k},\quad\textnormal{for}\:k=0,\dots,K.
\end{equation}
\begin{definition}
	For any $y,z\in \mathcal{W}_k$, define the singular solution
	\begin{equation*}
	S_{\mathcal{U}_k}(y,z)=\int_{\mathcal{U}_k}\big(\sigma^{(1)}(\cdot)-\sigma^{(2)}(\cdot)\big)\nabla G_1(\cdot,y)\cdot\nabla G_2(\cdot,z),\quad \textnormal{for}\:k=0,\dots,K.
	\end{equation*}
\end{definition}
The set $\{S_{\mathcal{U}_k}(y,z)\}_{k=0}^K$ is a family of real-valued functions which satisfies the following inequality:
\begin{equation}\label{remark on SU}
|S_{\mathcal{U}_k}({y},z)|\leq C
\|\sigma^{(1)}-\sigma^{(2)}\|_{L^{\infty}(\Omega)}\left(d(y)d(z)\right)^{1-\frac{n}{2}},
\quad\mbox{for\:every}\:y,z\in\mathcal{W}_k,
\end{equation}
where $d(y)=\mbox{dist}(y,\mathcal{U}_k)$ and $C$ is a positive constant depending on $\lambda$ and $n$ only.

One can prove (see \cite{A-V}) that for every $y,z\in\mathcal{W}_k$ with $k=0,\dots,K$, the functions $S_{\mathcal{U}_k}(\cdot,z),
S_{\mathcal{{U}}_k}(y,\cdot)$ belongs to $H^1_{loc}(\mathcal{W}_k)$ and are weak solutions, respectively, to
\begin{equation*}
\textnormal{div} \left(\sigma^{(1)}(\cdot)\nabla
S_{\mathcal{U}_k}(\cdot,z)\right)=0,\qquad \textnormal{div}
\left(\sigma^{(2)}(\cdot) \nabla S_{\mathcal{U}_k}(y,\cdot)\right)=0 \,\quad\mbox{in}\:\mathcal{W}_k.
\end{equation*}

We introduce the following parameters:
\begin{eqnarray}\label{parameters}
	& &\beta=\arctan{\frac{1}{L}},\quad\beta_1 = \arctan{\left(\frac{\sin\beta}{4}\right)},\quad\lambda_1=\frac{r_0}{1+\sin\beta_1},\nonumber\\
	& & \rho_1=\lambda_1\sin\beta_1,\quad a=\frac{1-\sin\beta_1}{1+\sin\beta_1},\nonumber\\
	& & \lambda_m=a\lambda_{m-1},\quad \rho_m = a\rho_{m-1},\quad\textnormal{for\:every}\:m\geq 2,\nonumber\\
	& & d_m=\lambda_m-\rho_m,\quad m\geq 1.
\end{eqnarray}

Notice that $d_m=r_0 a^m, \ 0<a<1$.

Choose $l\in \N$, fix a point $\bar{y}\in \Sigma_{m+1}$, then define 
\begin{eqnarray}
w=w_{l}(\bar{y})=\bar{y}-\lambda_{l}\nu(\bar{y}), \qquad \mbox{for every} \ l\ge 1 \ ,
\end{eqnarray}
where $w$ is a point into the domain $D_{m}$ near the interface $\Sigma_{m+1}$. 
For a given $r\in (0,d_1]$ define the function
\begin{eqnarray}\label{hbar}
\bar{h}(r)=\min\{l\in\mathbb{N}\::\:d_{l}\leq r\}.
\end{eqnarray}
For successive estimates, it is important to point out the following inequality:
\begin{eqnarray}\label{Sh2}
\log\left(\frac{r}{d_1}\right)^C\le \bar{h}(r)-1\le \log\left(\frac{r}{d_1}\right)^C +1, \quad &&C=\frac{1}{|\log a|}.
\end{eqnarray}

The following estimate for $S_{\mathcal{U}_k}(y,z)$ holds true, for any $k=1,\dots , K$.

\begin{proposition}\label{proposizione unique continuation finale}({Estimates of unique continuation})
	Suppose that for a positive number $\varepsilon_0$ and $r>0$ we have
	\begin{equation}\label{estim0}
	\left|S_{\mathcal{U}_k}(y,z)\right|\leq
	r_0^{2-n}\varepsilon_0,\quad \mbox{for every}\: (y,z)\in
	(D_0)_{r}\times(D_0)_{r},
	\end{equation}
%
%
	then the following inequalities hold true for every $r\in (0,d_1]$
	\begin{equation}\label{estim1}
	\left|S_{\mathcal{U}_k}\left(w_{\bar{h}}(Q_{k+1}),w_{\bar{h}}(Q_{k+1})\right)\right|
	\leq
	C_1^{\bar{h}}(E+\varepsilon_0)\left(\omega_{1/C}^{(2k)}\left(\frac{\varepsilon_0}{E+\varepsilon_0}\right)\right)
	^{\left(1/C\right)^{\bar{h}}},
	\end{equation}
	\begin{equation}\label{estim2}
	\left|\partial_{y_j}\partial_{z_i}S_{\mathcal{U}_k}\left(w_{\bar{h}}(Q_{k+1}),w_{\bar{h}}(Q_{k+1})\right)\right|
	\leq
	C_2^{\bar{h}}(E+\varepsilon_0)\left(\omega_{1/C}^{(2k)}\left(\frac{\varepsilon_0}{E+\varepsilon_0}\right)\right)
	^{\left(1/C\right)^{\bar{h}}},
	\end{equation}
	for any $i,j=1,\dots , n$, where $Q_{k+1}\in\Sigma_{k+1}\cap B_{\frac{r_0}{8}}(P_{k+1})$,
	$w_{\bar{h}(r)}(Q_{k+1})=Q_{k+1}-\lambda_{\bar{h}(r)}\nu(Q_{k+1})$, with $\lambda_{\bar{h}(r)}$ as above,
	$\nu(Q_{k+1})$ is the exterior unit normal to $\partial{D}_{k}$  at the point $Q_{k+1}$ pointing outside $D_k$ and $C_1,C_2>0$
	depend on the a-priori data only.
	
\end{proposition}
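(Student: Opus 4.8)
The plan is to prove this by induction on $k$, propagating the smallness of $S_{\mathcal{U}_k}$ from the "observation" region $(D_0)_r\times (D_0)_r$ deep into the chain of subdomains, one interface at a time. The starting point is the hypothesis \eqref{estim0}, which bounds $|S_{\mathcal{U}_k}|$ on $(D_0)_r\times(D_0)_r$ by $r_0^{2-n}\varepsilon_0$; together with the global bound \eqref{remark on SU}, which plays the role of an a-priori bound $E$ on $\|\sigma^{(1)}-\sigma^{(2)}\|_{L^\infty}$ (after multiplying by a power of $d(y)d(z)$). First I would fix the variable $z$ in a point of $(D_0)_r$ and view $y\mapsto S_{\mathcal{U}_k}(y,z)$ as a weak solution of $\mathrm{div}(\sigma^{(1)}\nabla\cdot)=0$ in $\mathcal{W}_k$, and apply the three-spheres inequality / propagation of smallness along a chain of balls connecting $D_0$ to a neighbourhood of the interface $\Sigma_{k+1}$, crossing the subdomains $D_1,\dots,D_k$ in turn. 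Since $S_{\mathcal{U}_k}$ solves an equation with only Lipschitz coefficients, unique continuation across each flat interface $\Sigma_{m_{k+1}}$ is available, but each crossing degrades the modulus of continuity by one composition of $\omega_{1/C}$, which accounts for the iterate $\omega_{1/C}^{(2k)}$ (a factor of two because the estimate must be propagated in both the $y$ and the $z$ variable).

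Concretely, the key steps in order are: (i) normalize by setting $E$ equal to (a constant times) the a-priori bound coming from \eqref{remark on SU}, so that $\varepsilon_0/(E+\varepsilon_0)$ is the natural small parameter; (ii) build the chain of balls $B_{\rho_m}(w_m)$ with $w_m = \bar y - \lambda_m\nu(\bar y)$ using the parameters \eqref{parameters}, noting $d_m = r_0 a^m$, so that successive balls overlap and each is compactly contained in the appropriate subdomain or straddles one flat interface; (iii) apply the single-step propagation-of-smallness estimate across each interface — this is where the flatness of $\Sigma_{m_{k+1}}$ and the $C^{0,1}$ regularity of $\sigma$ enter — picking up one $\omega_{1/C}$ and a multiplicative constant $C_1$ per step; (iv) iterate over the $\bar h(r)$ balls needed to reach distance $r$ from the interface $\Sigma_{k+1}$, which produces the exponent $(1/C)^{\bar h}$ and the factor $C_1^{\bar h}$ via \eqref{Sh2}; (v) use the semigroup-type properties \eqref{property1}–\eqref{property2} of $\omega_b$ to absorb the various rescalings ($t/\beta$, $t^\beta$) into the stated form; (vi) finally, for \eqref{estim2}, upgrade the $C^0$ bound on $S_{\mathcal{U}_k}$ to a bound on the mixed second derivatives $\partial_{y_j}\partial_{z_i}S_{\mathcal{U}_k}$ at the point $w_{\bar h}(Q_{k+1})$ by interior gradient estimates (Schauder or $W^{2,p}$ / Caccioppoli-type estimates applied twice, once in $y$ and once in $z$, on a ball of radius comparable to $d_{\bar h}$), paying only an extra negative power of $d_{\bar h}$ that is reabsorbed into the constant $C_2^{\bar h}$.

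The main obstacle I expect is step (iii)–(iv): carrying out the single-interface propagation of smallness with explicit, a-priori-data-dependent constants, and then iterating it $\bar h(r)$ times while keeping careful track of how the modulus $\omega_{1/C}$ composes and how the constants accumulate geometrically. The delicate point is that one cannot simply apply a bulk three-spheres inequality — across each flat interface $\Sigma_{m_{k+1}}$ the coefficient $\sigma$ is only Lipschitz (indeed discontinuous across the interface in the scalar factor $\gamma$), so one needs a unique continuation result adapted to this transmission-type geometry, with a quantitative bound that behaves well under iteration; this is precisely the mechanism that turns an algebraic (Hölder) rate near $D_0$ into a $(1/C)^{\bar h}$-th power of an iterated logarithm near $\Sigma_{k+1}$. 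Managing the dependence of all constants on the chain length $K$ (and hence on $N$) so that they remain \emph{a-priori} constants, and verifying that the geometric constructions with the parameters \eqref{parameters} genuinely produce overlapping balls inside the prescribed cylinders $Q_{r_0/3}$, are the bookkeeping-heavy parts that must be done with care but present no conceptual difficulty beyond what is already in \cite{A-V}.
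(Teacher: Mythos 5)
Your plan coincides with the paper's proof in all essentials: the paper first establishes a preliminary propagation-of-smallness result (Proposition \ref{preliminaryprop}) for a single weak solution of $\mathrm{div}(\sigma\nabla v)=0$ in $\mathcal{W}_k$, proved by induction over the chain of subdomains via chains of balls, the three-spheres inequality, transmission conditions and a Trytten-type Cauchy stability estimate at each flat interface, and then applies it once in $y$ and once in $z$ (using \eqref{remark on SU} as the a-priori bound $E$), which is exactly your mechanism for the iterate $\omega_{1/C}^{(2k)}$ and the exponent $(1/C)^{\bar h}$. Your step (vi) also matches the paper's derivation of \eqref{estim2}, which treats $S_{\mathcal{U}_k}$ as a solution of a $2n$-dimensional elliptic equation and applies interior gradient estimates on balls of radius $\rho_{\bar h}$, absorbing the factor $\rho_{\bar h}^{-2}$ into $C_2^{\bar h}$ via \eqref{Sh2}.
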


\subsection{Proof of Theorem \ref{teorema principale} and the Corollary \ref{corollary}}

\begin{proof}[Proof of \textit{Theorem \ref{teorema principale}.}] 
	First, notice that
	\[
	\|\sigma^{(1)}-\sigma^{(2)}\|_{L^{\infty}(\Omega)}\le \|\gamma^{(1)}-\gamma^{(2)}\|_{L^{\infty}(\Omega)}\,\,\bar{A},
	\]
	where $\bar{A}$ is the Lipschitz constant from assumptions \ref{apriori sigma}. Let $D_K$ be the subdomain of $\Omega$ such that
		\begin{align*}
	\|\gamma^{(1)}-\gamma^{(2)}\|_{L^{\infty}(\Omega)} 
	=\|\gamma_K^{(1)}-\gamma_K^{(2)}\|_{L^{\infty}(D_K)}.
	\end{align*}
	
	Then, inequality \eqref{stabilita' globale} will follow from 
	\begin{equation}\label{sigmaKestimate}
	\|\gamma_K^{(1)}-\gamma_K^{(2)}\|_{L^{\infty}(D_K)}\le C \Big(\mathcal{J}(\sigma^{(1)},\sigma^{(2)})\Big)^{1\slash 2},
	\end{equation}
	for $C>1$ a positive constant depending on a priori estimates.
	
	To prove \eqref{sigmaKestimate}, we find convenient , as previously stated, to work in the augmented domain $\widetilde{\Omega}$ as in \eqref{aumenteddomain}, where $D_0$ is the domain defined in \eqref{dtilde}, on which we have defined the extended conductivity $\sigma^{(i)}$ for $i=1,2$ by setting $\sigma^{(i)}|_{D_0}=I$. Recalling that $D_K$ is the subdomain of $\Omega$ where the maximum of $|\gamma^{(1)} - \gamma^{(2)}|$ is reached, let $D_0,D_1,\dots ,D_K$ be the chain of subdomains as in Section \ref{subsec assumption domain} and let $\Sigma_1,\dots,\Sigma_K$ be the corresponding flat portions. Set
	\begin{align}\label{deltak}
	\varepsilon_0=\Big(\mathcal{J}(\sigma^{(1)},\sigma^{(2)})\Big)^{1\slash 2},\qquad &E=\|\gamma_K^{(1)}-\gamma_K^{(2)}\|_{L^{\infty}(D_K)},\\
	\delta_k=\|\gamma^{(1)}-\gamma^{(2)}\|_{L^{\infty}(\mathcal{W}_k)}, \quad &\mbox{for }k=1,\dots,K.
	\end{align}
	Given a differentiable function $f$ on a domain $\Omega$, we can split its differential as
	\[
	Df(x)=D_T f(x) + \p_{\nu}f(x), \qquad \mbox{for }x\in \Sigma_k, \, k=1,\dots,K,
	\]
	where $D_{T}f$ is the $n-1$ dimensional vector of the tangential partial derivatives of $f$ on $\Sigma_k$ and $\partial_\nu f$ denotes the normal partial derivative of $f$ on $\Sigma_k$, respectively for $k=1,2,\dots,K$. 
	
	Fix $0<r_2<r_1$ such that $\Sigma_k\cap B_{r_1}(P_k)\neq \emptyset$ for $k=1,2,\dots,K$. We observe that the norm $\|\gamma^{(1)}_k - \gamma^{(2)}_k \|_{L^{\infty}(D_k)}$ can be estimated in terms of the quantities
	\begin{eqnarray}\label{coefficientbounds}
	\|\gamma^{(1)}_k - \gamma^{(2)}_k \|_{L^{\infty}(\Sigma_k\cap B_{r_1}(P_k))}\quad\mbox{and}\quad &&\left| \partial_{\nu}(\gamma^{(1)}_k - \gamma^{(2)}_k)(P_k)\right|.
	\end{eqnarray}
	In fact, fix an orthonormal basis $\{e^k_j\}_{j=1,\dots, n-1}$ which generates the hyperplane containing the flat part $\Sigma_k$. Set
	\begin{equation*}
	\alpha_k+\beta_k\cdot x = \big(\gamma^{(1)}_k -\gamma^{(2)}_k\big)(x),\qquad x\in D_k.
	\end{equation*}
	 If we evaluate $\big(\gamma^{(1)}_k -\gamma^{(2)}_k\big)$ at the points $P_k+r_2e^k_j$, $j=1,\dots, n-1$, it follows that
	\begin{equation*}
	\left|\alpha_k + \beta_k\cdot \big(P_k+r_2 e^k_j\big)\right| \leq
	|\alpha_k+\beta_k\cdot P_k|+r_1 \sum_{j=1}^{n-1}|\beta_k\cdot e^k_j|
	\leq  C\|\gamma^{(1)}_k -\gamma^{(2)}_k\|_{L^{\infty}(\Sigma_k\cap B_{r_1}(P_k))}.
	\end{equation*}
	Next, notice that
	\begin{equation*}
	|\beta_k\cdot\nu|=\left|\p_{\nu}(\gamma^{(1)}_k-\gamma^{(2)}_k)(P_k)\right|.
	\end{equation*}
	In conclusion, for $k=1,\dots,K$,
	\begin{equation*}
	|\alpha_k|+|\beta_k|\leq C\left(\|\gamma^{(1)}_k -\gamma^{(2)}_k\|_{L^{\infty}(\Sigma_k\cap B_{r_1}(P_k))} + \left|\partial_{\nu}(\gamma^{(1)}_k-\gamma^{(2)}_k)(P_k)\right|\right).
	\end{equation*}
	Hence, our task will be to estimate the quantities introduced in \eqref{coefficientbounds} for $k=1,\dots,K$ in terms of the function $\omega_{1\slash C}$ introduced in Section \ref{uniquecontinuation}, $\varepsilon_0$ and $E$.
	
	\subsubsection{Boundary estimates}
	Let us start from the case $k=1$. We will prove the following estimate:
	
	
	\begin{equation}\label{casek1}
	\|\gamma^{(1)}_1 -\gamma^{(2)}_1\|_{L^{\infty}(\Sigma_1\cap B_{r_1}(P_1))}+\left|\partial_{\nu}(\gamma^{(1)}_1-\gamma^{(2)}_1)(P_1)\right|\leq  C(\varepsilon_0+E)\omega_{1/C}^{(0)}\left(\frac{\varepsilon_0}{\varepsilon_0+E}\right).
	\end{equation}
	For every $y,z\in (D_0)_{r}$, by Green formula the following equalities hold:
	
	\begin{align}\label{Alessandrini 1}
	&\int_{\Sigma} \left[ G_2(\cdot,z)\,\,\sigma^{(1)}(\cdot)\nabla G_1(\cdot,y)\cdot\nu - G_1(\cdot,y)\sigma^{(2)}(\cdot)\nabla G_2(\cdot,z)\cdot\nu\right]\:dS
	=\\
	&=\int_{\Omega}(\sigma^{(1)}-\sigma^{(2)})(\cdot)\nabla G_1(\cdot,y)\cdot\nabla G_2(\cdot,z),\nonumber
	\end{align}
	and
	\begin{align}\label{Alessandrini 2}
	&\int_{\Sigma} \left[ \p_{z_n} G_2(\cdot,z)\,\,\sigma^{(1)}(\cdot)\nabla \p_{y_n} G_1(\cdot,y)\cdot\nu - \p_{y_n} G_1(\cdot,y)\,\,\sigma^{(2)}(\cdot)\nabla \p_{z_n} G_2(\cdot,z)\cdot\nu\right]\:dS
	=\\
	&=\int_{\Omega}(\sigma^{(1)}-\sigma^{(2)})(\cdot)\,\nabla\partial_{y_n}G_1(\cdot,y)\cdot\nabla\partial_{z_n} G_2(\cdot,z),\nonumber
	\end{align}
	where $G_1(\cdot,y)$ and $G_2(\cdot,z)$ are weak solutions to the problem \eqref{greensystem}.
Since $S_{\mathcal{U}_0}(y,z)$ and $\p_{y_n}\p_{z_n}S_{\mathcal{U}_0}(y,z)$ are weak solutions to the following equation
\[
\mbox{div}\Big(\sigma^{(1)}(\cdot)\nabla S_{\mathcal{U}_0}(\cdot,z)\Big) + \mbox{div}\Big(\sigma^{(2)}(\cdot)\nabla S_{\mathcal{U}_0}(y,\cdot)\Big)=0, \quad \mbox{in }D_y\times D_z,
\]
we can apply a result of local boundedness for weak solutions of a uniformly elliptic operator (see \cite[Chapter 8]{G-T}) that allows us to bound the supremum of $S_{\mathcal{U}_0}(y,z)$ by its $L^2$-norm as follows:
\begin{align}\label{su0bound}
\sup\limits_{(y,z)\in (D_y)_r\times (D_z)_r} |S_{\mathcal{U}_0}(y,z)|\leq C\Big(\int_{D_y\times D_z} |S_{\mathcal{U}_0}(y,z)|^2 dy\,dz\Big)^{1\slash 2} = C\Big(\mathcal{J}(\sigma^{(1)},\sigma^{(2)})\Big)^{1\slash 2},
\end{align}
where $C$ depends on $n$, $\lambda$, $|\Omega|$ and $r\in (0,r_0/6)$.

Let $\rho_0=r_0\slash C_4$, where $C_4$ is the constant introduced in Theorem \ref{teorema stime asintotiche}. Let $r\in(0,d_2]$ and define the point $w=w(P_1)=P_1 - \tau\nu(P_1)$ where $\nu(P_1)$ is the unit outward normal of $\p D_1$ at the point $P_1$ and $\tau=\lambda_{\bar{h}(r)}=a^{\bar{h}-1}\lambda_1, \,\hh=\hh(r)$ is defined in \eqref{hbar}.

Set $y=z=w$, split the right hand side of \eqref{Alessandrini 1} into the sum of two integrals $I_1(w)$ and $I_2(w)$:
\[
S_{\mathcal{U}_{0}}(w,w)=I_1(w)+I_2(w),
\]
where
\begin{eqnarray}
I_1(w)&=&\int_{B_{\rho_0}(P_1)\cap D_1}(\gamma_1^{(1)}-\gamma_1^{(2)})(\cdot)A(\cdot)\nabla G_1(\cdot,w)\cdot
\nabla G_2(\cdot,w),\nonumber\\
I_2(w)&=&\int_{\Omega\setminus (B_{\rho_0}(P_1)\cap
	D_1)}(\sigma^{(1)}-\sigma^{(2)})(\cdot)
\nabla G_1(\cdot,w)\cdot\nabla G_2(\cdot,w).\nonumber
\end{eqnarray}
The integral $I_2(w)$ can be easily estimate using \cite[Proposition 3.1]{A-V} as
\begin{equation}\label{caccio}
|I_2(w)|\leq CE\rho_0^{2-n},
\end{equation}
Let us estimate $I_1(w)$ from below in terms of $\|\gamma^{(1)}_1-\gamma^{(2)}_1\|_{L^{\infty}(\Sigma_1\cap B_{r_1}(P_1))}$. Let $\overline{x}\in \overline{\Sigma_1\cap B_{r_1}(P_1)}$ be such that
\[
(\gamma^{(1)}_1-\gamma^{(2)}_1)(\overline{x})=\|\gamma^{(1)}_1-\gamma^{(2)}_1\|_{L^{\infty}(\Sigma_1\cap B_{r_1}(P_1))}.
\]
Since $(\gamma^{(1)}_1 - \gamma^{(2)}_1)(x)=\alpha_1+\beta_1\cdot x$, 
\begin{eqnarray}\label{I1first}
I_1(w)  & = & \int_{B_{\rho_0}(P_1)\cap D_1}(\gamma^{(1)}_1-\gamma^{(2)}_1)(\overline{x})A(x)\nabla G_1(x,w)\cdot
\nabla G_2(x,w)\:dx +\nonumber\\
&+& \int_{B_{\rho_0}(P_1)\cap D_1} \beta_1\cdot(x-\overline{x})A(x)
\nabla G_1(x,w)\cdot\nabla G_2(x,w)\:dx,
\end{eqnarray}
which leads to
\begin{eqnarray}\label{I1second}
|I_1(w)|  & \geq &  \left|\int_{B_{\rho_0}(P_1)\cap D_1}(\gamma^{(1)}_1-\gamma^{(2)}_1)(\bar{x})\:A(x)\:\nabla G_1(x,w)\cdot
\nabla G_2(x,w)\right|-\nonumber\\
&-& \bar{A}\int_{B_{\rho_0}(P_1)\cap D_1}|\beta_1\cdot (x-\bar{x})|\:|\nabla G_1(x,w)|\:
|\nabla G_2(x,w)|\:dx.
\end{eqnarray}
If we set $\tilde{c}^{(1)}=\frac{2}{1+\gamma_1^{(1)}(P_1)}$ and $\tilde{c}^{(2)}=\frac{2}{1+\gamma_1^{(2)}(P_1)}$, by adding and subtracting the fundamental solution $\tilde{c}^{(i)}\Gamma$ we have
\begin{eqnarray}\label{I1third}
|I_1(w)| &\geq& \left|\int_{B_{\rho_0}(P_1)\cap D_1}(\gamma^{(1)}_1-\gamma^{(2)}_1)(\bar{x})\:A(x)\:\tilde{c}^{(1)}\:\tilde{c}^{(2)}\:|\nabla\Gamma(Jx,Jw)|^2\right|-\nonumber\\
&-& \int_{B_{\rho_0}(P_1)\cap D_1}|(\gamma^{(1)}_1-\gamma^{(2)}_1)(\bar{x})|\:|A(x)\nabla(G_1(x,w)-\tilde{c}^{(1)}\Gamma(Jx,Jw))\cdot\nabla(G_2(x,w)-\tilde{c}^{(2)}\Gamma(Jx,Jw))|dx\nonumber\\
&-& \bar{A}\int_{B_{\rho_0}(P_1)\cap D_1}|(\gamma^{(1)}_1-\gamma^{(2)}_1)(\bar{x})|\:|\nabla(G_1(x,w)-\tilde{c}^{(1)}\Gamma(Jx,Jw))|\tilde{c}^{(2)}|\nabla\Gamma(Jx,Jw)|dx\nonumber\\
&-& \bar{A}\int_{B_{\rho_0}(P_1)\cap D_1}|(\gamma^{(1)}_1-\gamma^{(2)}_1)(\bar{x})|\:\tilde{c}^{(1)}\:|\nabla\Gamma(Jx,Jw)||\nabla(G_2(x,w)-\tilde{c}^{(2)}\Gamma(Jx,Jw))|\:|dx\nonumber\\
&-& \int_{B_{\rho_0}(P_1)\cap D_1}|\beta_1\cdot (x-\bar{x})|\,|A(x)\nabla \Gamma(Jx,Jw)|\cdot\nabla \Gamma(Jx,Jw)|\:dx.
\end{eqnarray}
Now, up to a change of coordinate we can suppose that $P_1$ is the origin $O$. Let us apply the asymptotic estimate \eqref{asydernorgrad} to \eqref{I1third} for $J=\sqrt{A^{-1}(0)}$, it follows that

\begin{eqnarray}
|I_1(w)|&\geq & \|\gamma^{(1)}_1-\gamma^{(2)}_1\|_{L^{\infty}(\Sigma_1\cap B_{r_1})}C\lambda^{-1}\int_{B_{\rho_0}\cap D_1}|\nabla_x\Gamma(Jx,Jw)|^2\:dx-\nonumber\\
& &-C\:E\int_{B_{\rho_0}\cap D_1} |\nabla_x\Gamma(Jx,Jw)|\:|x-w|^{\theta_1+1-n}\:dx -\nonumber\\
& &-C\:E\int_{B_{\rho_0}\cap D_1} |x-w|^{2\theta_1+2-2n}\:dx -\nonumber\\
& &- C\:E\int_{B_{\rho_0}\cap D_1} |x-\overline{x}|\:|x-w|^{2-2n}\:dx,\nonumber
\end{eqnarray}
where the $C>0$ depends on the \textit{a-priori} data only. By definition \eqref{fondamentalsol}, we can express explicitly the fundamental solution $\Gamma$ inside the integrals and obtain:
\begin{eqnarray}\label{I1estimates}
|I_1(w)| &\geq& \|\gamma^{(1)}_1-\gamma^{(2)}_1\|_{L^{\infty}(\Sigma_1\cap B_{r_1})}C\lambda^{-1}\int_{B_{\rho_0}\cap D_1}\frac{|J^2(x-w)|}{|J(x-w)|^{n}}^2\:dx-\nonumber\\
& &-C\:E\int_{B_{\rho_0}\cap D_1} \frac{|J^2(x-w)|}{|J(x-w)|^n}\:|x-w|^{\theta_1+1-n}\:dx -\nonumber\\
& &-C\:E\int_{B_{\rho_0}\cap D_1} |x-w|^{2\theta_1+2-n}\:dx -\nonumber\\
& &- C\:\int_{B_{\rho_0}\cap D_1}|\beta_1|\:|x-\overline{x}|\:|x-w|^{2-2n}\:dx.
\end{eqnarray}
By estimating the integrals in \eqref{I1estimates} with respect to the parameter $\tau$, we can bound $|I_1(w)|$ from below as follows:
\begin{eqnarray}\label{I1estimatestau}
|I_1(w)| &\geq & \|\gamma^{(1)}_1-\gamma^{(2)}_1\|_{L^{\infty}(\Sigma_1\cap B_{r_1})}C\tau^{2-n} -C\:E\:\tau^{2-n+\theta_1}-C\tau^{2-n+2\theta_1}-C\:E\:\tau^{3-n}.
\end{eqnarray}
By \eqref{su0bound} and \eqref{caccio}, it follows that
\[
|I_1(w)|\leq |S_{\mathcal{{U}}_0}(w,w)|+|I_2(w)|\leq C\:\varepsilon_0\tau^{2-n} + C\:E\:\rho_0^{2-n},
\]
which leads to the following estimate for the conductivity:
\begin{equation*}
\|\gamma^{(1)}_1-\gamma^{(2)}_1\|_{L^{\infty}(\Sigma_1\cap B_{r_1}(P_1))}\tau^{(2-n)}\leq C\:\varepsilon_0\tau^{2-n} + C\:E\:\rho_0^{2-n} + C\:E\:\tau^{2-n+\theta_1}+C\tau^{2-n+2\theta_1}+C\:E\:\tau^{3-n}.\nonumber
\end{equation*}
Dividing by $\tau^{2-n}$ both sides and for $\tau\rightarrow 0^+$, we obtain

\begin{equation}\label{first estimate}
\|\gamma^{(1)}_1-\gamma^{(2)}_1\|_{L^{\infty}(\Sigma_1\cap B_{r_1}(P_1))}\leq C\varepsilon_0.
\end{equation}

Let us estimate $|\p_{\nu}(\gamma^{(1)}_1-\gamma^{(2)}_1)(P_1)|$. From \eqref{Alessandrini 2}, for $y=z=w$ as above, we split again the $nth$ partial derivative of the singular solution as follows:

\begin{equation}
\p_{y_n}\p_{z_n}S_{\mathcal{U}_0}(w,w)=\bar{I}_1(w)+\bar{I}_2(w),
\end{equation}
where
\[
\bar{I}_1(w)=\int_{B_{\rho_0}(P_1)\cap D_1}(\gamma^{(1)}_1-\gamma^{(2)}_1)(\cdot)A(\cdot)\nabla\partial_{y_n}G_1(\cdot,w)\cdot
\nabla\partial_{z_n}G_2(\cdot,w),
\]

\[
\bar{I}_2(w)=\int_{\Omega\setminus (B_{\rho_0}(P_1)\cap D_1)}(\sigma^{(1)}-\sigma^{(2)})(\cdot)\nabla\partial_{y_n}G_1(\cdot,w)\cdot
\nabla\partial_{z_n}G_2(\cdot,w).\]

With a similar argument as in \eqref{caccio} one can determine an upper bound for $\bar{I}_2$ of the form
\begin{equation}
|\bar{I}_2(w)|\leq CE\rho_0^{-n},
\end{equation}	
where $C$ depends on the \textit{a-priori} data.
Notice that for any point $x\in B_{\rho_0}(P_1)\cap D_1$, the following equality holds
\[
(\gamma^{(1)}_1-\gamma^{(2)}_1)(x)=(\gamma^{(1)}_1-\gamma^{(2)}_1)(P_1) + (D_T(\gamma^{(1)}_1-\gamma^{(2)}_1)(P_1))\cdot (x-P_1)' + (\partial_{\nu}(\gamma^{(1)}_1-\gamma^{(2)}_1)(P_1))(x-P_1)_n,
\]
Proceeding as in \eqref{I1first} and \eqref{I1second}, 

\begin{eqnarray*}
	|\bar{I}_1(w)| &\geq& \left|\int_{B_{\rho_0}(P_1)\cap D_1}(\partial_{\nu}(\gamma^{(1)}_1-\gamma^{(2)}_1)(P_1))(x-P_1)_n A(\cdot)\nabla\partial_{y_n}G_1(\cdot,w)\cdot
	\nabla\partial_{z_n}G_2(\cdot,w)\right|\nonumber\\
	& &-\int_{B_{\rho_0}(P_1)\cap D_1}|(D_T(\gamma^{(1)}_1-\gamma^{(2)}_1)(P_1))\cdot (x-P_1)'|\:|A(\cdot)\nabla\partial_{y_n}G_1(\cdot,w)\cdot
	\nabla\partial_{z_n}G_2(\cdot,w)|\nonumber\\
	& &-\int_{B_{\rho_0}(P_1)\cap D_1}|(\gamma^{(1)}_1-\gamma^{(2)}_1)(P_1)|\:|A(\cdot)\nabla\partial_{y_n}G_1(\cdot,w)\cdot
	\nabla\partial_{z_n}G_2(\cdot,w)|.
\end{eqnarray*}
Up to a rigid transformation, we can assume that $P_1$ coincides with the origin $O$ of the coordinate system. Using a similar technique as in \eqref{I1third} and by Theorem \ref{teorema stime asintotiche}, this leads to

\begin{eqnarray}
|\bar{I}_1(w)|
& &\geq
|\partial_{\nu}(\gamma^{(1)}_1-\gamma^{(2)}_1)(O)|C\int_{B_{\rho_0}\cap
	D_1}|\nabla_x\partial_{y_n}\Gamma(Jx,Jw)|^2 |x_n|-\nonumber\\
& &- C\bigg\{E\int_{B_{\rho_0}\cap
	D_1}|\partial_{y_n}\nabla_x\Gamma(Jx,Jw)|\:|x-w|^{\theta_2-n}|x_n|+\nonumber\\
& &+E\int_{B_{\rho_0}\cap
	D_1}|x-w|^{\theta_2-2n}|x_n|\bigg\}-\nonumber\\
& &-\int_{B_{\rho_0}\cap D_1}|D_T(\gamma^{(1)}_1-\gamma^{(2)}_1)|\:|x'|\:|\nabla\partial_{y_n}G_1(\cdot,w)|\:|
\nabla\partial_{z_n}G_2(\cdot,w)|-\nonumber\\
& & -\int_{B_{\rho_0}\cap D_1}\!\!|(\gamma^{(1)}_1-\gamma^{(2)}_1)(O)|\:|\nabla\partial_{y_n}G_1(\cdot,w)|\:|
\nabla\partial_{z_n}G_2(\cdot,w)|.
\end{eqnarray}

By \eqref{first estimate}, we derive the following lower bound:
\begin{eqnarray*}
	|\bar{I}_1(w)| &\geq &
	|\partial_{\nu}(\gamma^{(1)}_1-\gamma^{(2)}_1)(O)|C\int_{B_{\rho_0}(P_1)\cap D_1}|x-w|^{1-2n}-\noindent\\
	&-&C\:\bigg\{E\int_{B_{\rho_0}\cap	D_1}|x-w|^{1-2n+\theta_2}
	-\int_{B_{\rho_0}\cap D_1}|x-w|^{2-2n+\theta_2}-\nonumber\\
	&-&\varepsilon_0\int_{B_{\rho_0}\cap D_1}\:|x-w|^{1-2n} -\varepsilon_0\int_{B_{\rho_0}\cap D_1}\:|x-w|^{-2n}\bigg\},
\end{eqnarray*}
which leads to
\begin{equation}\label{I11}
|\partial_{\nu}(\gamma^{(1)}_1-\gamma^{(2)}_1)(O)|\tau^{1-n}\le |I_1(w)| + C\:\Big(\varepsilon_0\tau^{-n} + E\tau^{1-n+\theta_2}\Big).
\end{equation}

By unique continuation \eqref{estim2},
\begin{eqnarray}\label{I1}
|\bar{I}_1(w)|&\leq& |\p_{y_n}\p_{z_n}S_{\mathcal{{U}}_0}(w,w)|+|I_2(w)|\\
&\leq& C\:\varepsilon_0\tau^{-n} + C\:E\:\rho_0^{-n},\nonumber
\end{eqnarray}


Thus, by combining together \eqref{I11} and \eqref{I1}, it follows that
\begin{equation*}
|\partial_{\nu}(\gamma^{(1)}_1-\gamma^{(2)}_1)(O)|\tau^{1-n} \le  C \Big(\varepsilon_0\tau^{-n}+ E \rho_0^{-n}+\varepsilon_0\tau^{-n}+E\tau^{1-n+\theta_2}\Big),
\end{equation*}
which leads to
\begin{equation*}
|\partial_{\nu}(\gamma^{(1)}_1-\gamma^{(2)}_1)(O)| \le   C \left( \varepsilon_0 \tau^{-1}+E\tau^{\theta_2}\right).
\end{equation*}
Finally, optimizing the right hand side with respect to $\tau$, the estimate is given by the following inequality
\begin{equation*}
|\partial_{\nu}(\gamma^{(1)}_1-\gamma^{(2)}_1) (O)| \le   C \varepsilon_0^{\frac{\theta_2}{\theta_2+1}}(E+\varepsilon_0)^{\frac{1}{1+\theta_2}},
\end{equation*}
so that \eqref{casek1} is proved.

\subsubsection{Interior estimates}

We show that from the case $k=1$ we obtain the following estimate for the case $k=2$:
\begin{eqnarray}
& &\|\sigma^{(1)}_2-\sigma^{(2)}_2\|_{L^{\infty}(\Sigma_2\cap B_{r_1}(P_2))}\leq  \! C(\varepsilon_0+E)\left(\omega_{1/C}^{(3)}\left(\frac{\varepsilon_0}{\varepsilon_0+E}\right)\right)^
{\frac{1}{C}},\label{lipschitz stability gamma Sigma 2}\\
& &\left|\partial_{\nu}(\sigma^{(1)}_2-\sigma^{(2)}_2)(P_2)\right| \leq  C(\varepsilon_0+E)\left(\omega_{1/C}^{(4)}\left(\frac{\varepsilon_0}{\varepsilon_0+E}\right)\right)^{\frac{1}{C}}.\label{pointwise gamma normal Sigma 2}
\end{eqnarray}
Since the proofs of \eqref{lipschitz stability gamma Sigma 2} and \eqref{pointwise gamma normal Sigma 2} are similar, we prove \eqref{pointwise gamma normal Sigma 2}, assuming that \eqref{lipschitz stability gamma Sigma 2} holds.

%

\begin{eqnarray}\label{Alessandrini 2 gamma2}
\int_{\Sigma} &\left[ \p_{z_n} G_2(\cdot,z)\,\,\sigma^{(1)}(\cdot)\nabla \p_{y_n} G_1(\cdot,y)\cdot\nu - \p_{y_n} G_1(\cdot,y)\sigma^{(2)}(\cdot)\nabla \p_{z_n} G_2(\cdot,z)\cdot\nu\right]\:dS\nonumber\\
&=\partial_{y_n}\partial_{z_n}S_{\mathcal{U}_{1}}(y,z) +\int_{\mathcal{W}_{1}}(\sigma^{(1)}-\sigma^{(2)})(\cdot)\partial_{y_n}\nabla G_1(\cdot,y)\cdot\partial_{z_n}\nabla G_2(\cdot,z).
\end{eqnarray}

Let $\rho_0=r_0\slash C_4$, where $C_4$ is the constant introduced in Theorem \ref{teorema stime asintotiche}. Pick $r\in(0,r_0/6)$. Fix the point $w=w(P_2)=P_2-\tau\nu(P_2)$ where $\tau=a^{\bar{h}-1}\lambda_1$.
We split the integral solution into two parts:
\begin{equation}\label{S=I1+I2}
\partial_{y_n}\partial_{z_n}S_{\mathcal{U}_1}(w,w)=I_1(w)+I_2(w),
\end{equation}

where

\[I_1(w)=\int_{B_{\rho_0}(P_2)\cap D_2}(\gamma_2^{(1)}-\gamma_2^{(2)})(\cdot)\:A(\cdot)\:\partial_{y_n}\nabla G_1(\cdot,w)\cdot
\partial_{z_n}\nabla G_2(\cdot,w),\]

\[I_2(w)=\int_{\mathcal{U}_2\setminus (B_{\rho_0}(P_2)\cap
	D_2)}(\sigma^{(1)}-\sigma^{(2)})(\cdot)\:\partial_{y_n}
\nabla G_1(\cdot,w)\cdot
\partial_{z_n}\nabla G_2(\cdot,w).\]

As in the boundary estimates, we can bound from above $I_2(w)$ as follows:

\begin{equation}\label{stima I2}
|I_2(w)|\leq CE\rho_0^{-n}.
\end{equation}

Now, let us estimate from below the integral $I_1(w)$ in terms of the quantity $|\partial_{\nu}(\sigma^{(1)}_2-\sigma^{(2)}_2)(P_2)|$. First, notice that for any $x\in B_{\rho_0}(P_2)\cap\Sigma_2$ we can rewrite $\gamma^{(i)}_2$ as
\begin{equation}\label{expansion}
\gamma^{(i)}_2(x)=\gamma^{(i)}_2(P_2)+D_T\gamma^{(i)}_2(P_2)\cdot(x-P_2)'+\p_{\nu}(\gamma^{(i)}_2(P_2))(x-P_2)_n.
\end{equation}
By \eqref{expansion},
\begin{eqnarray*}
	|I_1(w)| & &\geq \left|\int_{B_{\rho_0}(P_1)\cap D_2}(\partial_{\nu}(\gamma^{(1)}_2-\gamma^{(2)}_2)(P_2))(x-P_2)_n\:A(x)\:\partial_{y_n}\nabla G_1(\cdot,w)\cdot
	\partial_{z_n}\nabla G_2(\cdot,w)\right|\nonumber\\
	& &-\int_{B_{\rho_0}(P_2)\cap D_2}|(D_T(\gamma^{(1)}_2-\gamma^{(2)}_2)(P_2))\cdot (x-P_2)'|\:|A(x)\:\partial_{y_n}\nabla G_1(\cdot,w)\cdot
	\partial_{z_n}\nabla G_2(\cdot,w)|\nonumber\\
	& &-\int_{B_{\rho_0}(P_2)\cap D_2}|(\gamma^{(1)}_2-\gamma^{(2)}_2)(P_2)|\:|A(x)\:\partial_{y_n}\nabla G_1(\cdot,w)\cdot
	\partial_{z_n}\nabla G_2(\cdot,w)|.
\end{eqnarray*}

Up to a rigid transformation of coordinates, we can assume that $P_2$ coincides with the origin $O$ of the coordinate system. By Theorem \ref{teorema stime asintotiche},

\begin{eqnarray}\label{stima S}
|I_1(w)|
& &\geq
|\partial_{\nu}(\gamma^{(1)}_2-\gamma^{(2)}_2)(O)|C\int_{B_{\rho_0}\cap
	D_2}|\partial_{y_n}\nabla_x\Gamma(Jx,Jw)|^2\: |x_n|\nonumber\\
& & -C E\int_{B_{\rho_0}\cap
	D_2}|\partial_{y_n}\nabla_x\Gamma(Jx,Jw)|\:|x-w|^{\theta_2-n}|x_n|\nonumber\\
& &-C E\int_{B_{\rho_0}\cap
	D_2}|x-w|^{2\theta_2-2n}|x_n|\nonumber\\
& &-\int_{B_{\rho_0}\cap D_2}|D_T(\gamma^{(1)}_2-\gamma^{(2)}_2)(O)|\:|x'|\:|A(x)\:\partial_{y_n}\nabla G_1(\cdot,w)\cdot
\partial_{z_n}\nabla G_2(\cdot,w)|\nonumber\\
& & - \int_{B_{\rho_0}\cap D_2}\!\!\!|(\gamma^{(1)}_2-\gamma^{(2)}_2)(0)|\:|A(x)\:\partial_{y_n}\nabla G_1(\cdot,w)\cdot
\partial_{z_n}\nabla G_2(\cdot,w)|.
\end{eqnarray}

We can estimate the two last terms of the right hand side by \eqref{lipschitz stability gamma Sigma 2}. Then
\begin{eqnarray*}
	|I_1(w)| &\geq &
	|\partial_{\nu}(\gamma^{(1)}_2-\gamma^{(2)}_2)(O)|C\int_{B_{\rho_0}\cap
		D_2}|x-w|^{1-2n}\noindent\\
	&-&C E\int_{B_{\rho_0}\cap
		D_2}|x-w|^{\theta_2+1-2n}\noindent\\
	&-&C E\int_{B_{\rho_0}\cap
		D_2}|x-w|^{2\theta_2+1-2n}\nonumber\\
	&-&(\varepsilon_0+E)\left(\omega_{1/C}^{(3)}\left(\frac{\varepsilon_0}{\varepsilon_0+E}\right)\right)^{1/C}
	\int_{B_{\rho_0}\cap D_2}\:|x-w|^{1-2n}\nonumber\\
	&-&(\varepsilon_0+E)\left(\omega_{1/C}^{(3)}\left(\frac{\varepsilon_0}{\varepsilon_0+E}\right)\right)^{1/C}\int_{B_{\rho_0}\cap D_2}\:|x-w|^{-2n},
\end{eqnarray*}
where the constant $C>0$ depends on the \textit{a-priori} data and on $J$. This leads to

\begin{equation*}
\left|\partial_{\nu}(\gamma^{(1)}_2-\gamma^{(2)}_2)(O)\right|\overline{r}^{(1-n)}\le |I_1(w)| + C\left\{(\varepsilon_0+E)\left(\omega_{1/C}^{(3)}\left(\frac{\varepsilon_0}{\varepsilon_0+E}\right)\right)^{1/C} \tau^{-n} + E\frac{\tau^{1-n+\theta_2}}{\rho_0^{\theta_2}}\right\}.
\end{equation*}

Secondly, by \eqref{S=I1+I2} and \eqref{stima I2},

\begin{equation*}
|I_1(w)|\le |\partial_{y_n}\partial_{z_n}S_{\mathcal{U}_{1}}(w,w)| + C E \rho_0^{-n}.
\end{equation*}

Combining the last two inequalities, it follows that

\begin{eqnarray}
\left|\partial_{\nu}(\gamma^{(1)}_2-\gamma^{(2)}_2)\right|\tau^{(1-n)} &\le& |\partial_{y_n}\partial_{z_n}S_{\mathcal{U}_{1}}(w,w)| + C\bigg\{E\rho_0^{-n}\nonumber\\
&+&(\varepsilon_0+E)\left(\omega_{1/C}^{(3)}\left(\frac{\varepsilon_0}{\varepsilon_0+E}\right) \right)^{1/C}\tau^{-n}+
+E\frac{\tau^{1-n+\theta_2}}{\rho_0^{\theta_2}}\bigg\}.\nonumber
\end{eqnarray}

By unique continuation (Proposition \ref{proposizione unique continuation finale}), we can estimate the integral solution as	
\[
\left|\partial_{y_j}\partial_{z_i} S_{\mathcal{U}_1}(w,w)\right|
\leq
r_0^{{-n}}C^{\bar{h}}(\varepsilon_0 +\delta_1+E)
\left(\omega^{(2)}_{1/C}\left(\frac{\varepsilon_0+\delta_1}{E+\delta_1+\varepsilon_0}\right)\right)^{\left(1/C\right)^{{{\bar{h}}}}},\]
so that 

\begin{eqnarray}
\left|\partial_{\nu}(\gamma^{(1)}_2-\gamma^{(2)}_2)(O)\right| &\le & C^{\bar{h}}(\varepsilon_0 +\delta_1+E)
\bigg(\omega^{(2)}_{1/C}\left(\frac{\varepsilon_0+\delta_1}{E+\delta_1+\varepsilon_0}\right)\bigg)^{\left(1/C\right)^{{{\bar{h}}}}}\tau^{(n-1)}+\nonumber\\
&+& C\tau^{(-1)}(\varepsilon_0+E)\left(\omega_{1/C}^{(3)}\left(\frac{\varepsilon_0}{\varepsilon_0+E}\right)\right)^{1/C}+CE\frac{\tau^{\theta_2}}{\rho_0^{\theta_2}}.\nonumber\\
\end{eqnarray}
Since $\hh$ is a function of $r$, we have to estimate $C^{\bar{h}}$ and $\Big(\frac{1}{C}\Big)^{\bar{h}}$ in terms of $r$. Recalling \eqref{Sh2}, it turns out that

\begin{eqnarray}
\left(\frac{d_1}{r}\right)^{C_1}\le C^{\bar{h}}\le C_2 \left(\frac{d_1}{r}\right)^{C_1} \nonumber .
\end{eqnarray}
Since $\tau\leq \lambda_1\cdot\frac{r}{d_1}$,

\begin{eqnarray}\label{461}
|\partial_{\nu}(\gamma^{(1)}_2-\gamma^{(2)}_2)(O)| &\leq & C(\varepsilon_0 +E)\bigg\{\left(\frac{r}{d_1}\right)^{n-1-C}
\bigg(\omega^{(2)}_{1/C}\left(\frac{\varepsilon_0 +\delta_1}{E+\delta_1+\varepsilon_0}\right)\bigg)^{\left(\frac{r}{d_1}\right)^C}+\nonumber\\
&+&\left(\frac{r}{d_1}\right)^{-1} \left(\omega_{1/C}^{(3)}\left(\frac{\varepsilon_0}{\varepsilon_0+E}\right)\right)^{1/C}
+\left(\frac{r}{d_1}\right)^{\theta_2}\bigg\}.
\end{eqnarray}
One can show that the following inequality holds:

\begin{equation}\label{iteration 1}
\frac{\varepsilon_0 +\delta_1}{E+\delta_1+\varepsilon_0}\leq C \omega_{1/C}^{(0)}\left(\frac{\varepsilon_0}{\varepsilon_0+E}\right).
\end{equation}
Then, combining \eqref{iteration 1} together with \eqref{461}, 

\begin{equation*}
\left|\partial_{\nu}(\gamma^{(1)}_2-\gamma^{(2)}_2)(O)\right| \leq C(\varepsilon_0 +E)\bigg\{\left(\frac{r}{d_1}\right)^{n-1-C}
\bigg(\omega^{(3)}_{1/C}\left(\frac{\varepsilon_0}{E+\varepsilon_0}\right)\bigg)^{\left(\frac{r}{d_1}\right)^C}+\left(\frac{r}{d_1}\right)^{\theta_2}\bigg\}.
\end{equation*}
Finally, optimizing with respect to $r$, \eqref{pointwise gamma normal Sigma 2} follows.

Proceeding as above, for $k=3,\dots,K$, one can show that the following inequalities hold:
\begin{eqnarray}
\| \gamma^{(1)}_k-\gamma^{(2)}_k\|_{L^{\infty}(\Sigma_k\cap B_{r_1}(P_k))}&\leq& C(\varepsilon_0+E)\left(\omega_{1/C}^{(2k-1)}\left(\frac{\varepsilon_0}{\varepsilon_0+E}\right)\right)^
{\frac{1}{C}}\!\!\!,\label{lipschitz stability gamma Sigma k}\\
\left|\partial_{\nu}(\gamma^{(1)}_k-\gamma^{(2)}_k)(P_k)\right| &\leq&  C(\varepsilon_0+E)\left(\omega_{1/C}^{(2k)}\left(\frac{\varepsilon_0}{\varepsilon_0+E}\right)\right)^{\frac{1}{C}}.\label{pointwise gamma normal Sigma l}
\end{eqnarray}
By reformulating \eqref{Alessandrini 1} and \eqref{Alessandrini 2} as

\begin{align}
\int_{\Sigma} &\left[ G_2(\cdot,z)\,\,\sigma^{(1)}(\cdot)\nabla G_1(\cdot,y)\cdot\nu - G_1(\cdot,y)\sigma^{(2)}(\cdot)\nabla G_2(\cdot,z)\cdot\nu\right]\:dS=\nonumber\\
&=S_{\mathcal{U}_{k-1}}(y,z)+\int_{\mathcal{W}_{k-1}}(\sigma^{(1)}-\sigma^{(2)})(\cdot)\nabla G_1(\cdot,y)\cdot\nabla G_2(\cdot,z)
\end{align}
and
\begin{align}
\int_{\Sigma} &\left[ \p_{z_n} G_2(\cdot,z)\,\,\sigma^{(1)}(\cdot)\nabla \p_{y_n} G_1(\cdot,y)\cdot\nu - \p_{y_n} G_1(\cdot,y)\sigma^{(2)}(\cdot)\nabla \p_{z_n} G_2(\cdot,z)\cdot\nu\right]\:dS\nonumber\\
&=\partial_{y_n}\partial_{z_n}S_{\mathcal{U}_{k-1}}(y,z) +\int_{\mathcal{W}_{k-1}}(\sigma^{(1)}-\sigma^{(2)})(\cdot)\nabla\partial_{y_n}G_1(\cdot,y)\cdot\nabla\partial_{z_n}G_2(\cdot,z),
\end{align}
respectively, the procedure is similar to the one seen above. We just point out that, for $(y,z)\in \mathcal{W}_k \times \mathcal{W}_k$,
\begin{equation*}
|S_{\mathcal{U}_{k-1}}(y,z)|
\leq Cr_0^{2-n}(\varepsilon_0+\delta_{k-1}),
\end{equation*}
then we can bound from above the integral solution by unique continuation \eqref{estim1} and \eqref{estim2}. 

Notice that
\[
\delta_k\leq \delta_{k-1}+\|\gamma^{(1)}_k - \gamma^{(2)}_k  \|_{L^{\infty}(D_k)}.
\]
From the property \eqref{property1} it follows that
\[
\omega^{(2k)}_{1/C}(1)\leq \frac{\varepsilon_0+\delta_{k-1}+E}{\varepsilon_0+\delta_{k-1}} \omega^{(2k)}_{1/C}\left(\frac{\varepsilon_0+\delta_{k-1}}{\varepsilon_0+\delta_{k-1}+E}\right)
\]
and 
\[ \delta_{k-1}+\varepsilon_0\leq (\omega^{(2k)}_{1/C}(1))^{-1}(\varepsilon_0+\delta_{k-1}+E)\left(\omega^{(2k)}_{1/C}\left(\frac{\varepsilon_0+\delta_{k-1}}{\varepsilon_0+\delta_{k-1}+E}\right)\right).
\]
By the estimates \eqref{lipschitz stability gamma Sigma k} and \eqref{pointwise gamma normal Sigma l} it follows that
\[
\delta_k+\varepsilon_0\leq C (\varepsilon_0+E)\Big(\omega_{1/C}^{(2k)}\Big(\frac{\varepsilon_0}{\varepsilon_0+E}\Big)\Big)^{1\slash C}.
\]
This leads to the following estimate for $E=\delta_K$

\begin{equation*}
E+\varepsilon_0\leq C(\varepsilon_0 + E)\left(\omega_{1/C}^{(2K)}\left(\frac{\varepsilon_0}{\varepsilon_0 + E}\right)\right)^{\frac{1}{C}}.
\end{equation*}
%
%
Since the function $\omega_{1/C}$ is invertible for $\frac{\varepsilon_0}{\varepsilon_0 + E}<e^{-2}$ (otherwise the statement is proven), it follows that

\[E\leq \frac{C-\Big(\omega_{1\slash C}^{(2K)}\left(\frac{1}{C}\right)\Big)^{-1}}{\Big(\omega_{1\slash C}^{(2K)}\left(\frac{1}{C}\right)\Big)^{-1}}\:\varepsilon_0.\]

Hence, \eqref{stabilita'globale gamma} is proven.

\end{proof}
\begin{proof}[Proof of Corollary \ref{corollary}]
	Assume that the hypothesis of Theorem \ref{teorema principale} hold, then there exists a constant $C>1$ such that
	\[
	\|\sigma^{(1)}-\sigma^{(2)}\|_{L^{\infty}(\Omega)}\leq C
	\left(\mathcal{J}(\sigma^{(1)},\sigma^{(2)})\right)^{1\slash 2}.
	\]
	First, by the Alessandrini's identity,
	\[
	S_{\mathcal{U}_0}(y,z)=\la (\Lambda_1-\Lambda_2)G_1(\cdot,y),G_2(\cdot,z)\ra,
	\]
	where $G_1(\cdot,y), G_2(\cdot,z)\in H^{1\slash 2}_{00}(\Sigma)$ for $y,z\in D_0$ since they are weak solutions to the problem \eqref{greensystem}. Then, it follows that
	\begin{equation*}
	|S_{\mathcal{U}_0}(y,z)|\leq C \|\Lambda_1-\Lambda_2\|_*,
	\end{equation*}
	where
	\[
	\|\Lambda_1-\Lambda_2\|_*=\sup\limits_{f,g\in H^{1\slash 2}_{00}(\Sigma), \:\|g\|=\|\varphi\|=1} |\la(\Lambda_1-\Lambda_2) g,\varphi\ra|.
	\]
	Then
	\begin{equation}\label{stabilita'globale gamma}
	\Big(\mathcal{J}(\sigma^{(1)},\sigma^{(2)})\Big)^{1\slash 2}\leq C \|\Lambda_1-\Lambda_2\|_*,
	\end{equation}
	where $C>0$ depends on the \textit{a-priori} data only. Then the inequality \eqref{corollaryequation} trivially follows.
	
\end{proof}


\section{Proof of technical propositions}\label{technical}

In this section we give the proof of the propositions needed for the proof of the main result (Theorem \ref{teorema principale}).


\subsection{Asymptotic estimates}
Let $0<\mu<1$ and $B^+\in C^{\mu}(Q^+_r)$, $B^-\in C^{\mu}(Q^-_r)$ be symmetric, positive definite, matrix valued functions and define
\[
B(x)=
\begin{cases}
	B^+(x), &x\in Q^+_r,\\
	B^-(x), &x\in Q^-_r,
\end{cases}
\]
such that $B$ satisfies the uniform ellipticity condition
\begin{displaymath}
	\begin{array}{ll}
		\lambda^{-1}_0|\xi|^2\,\, \leq\,\, B(x)\xi\cdot\xi\,\,\leq\,\, \lambda_0 |\xi|^2, &\textrm{$\textnormal{for a.e. }x\in Q_r$, $\textnormal{for every }\xi\in\R^n$},
	\end{array}
\end{displaymath}
where $\lambda_0>0$ is a constant. Let $\bar{b}>0$ and define 
\begin{displaymath}
	b(x)=\left\{ \begin{array}{ll}
		b^+ + B^+\cdot x, &\quad x\in Q^+_{r},\\
		b^- + B^-\cdot x, &\quad x\in Q^-_{r} \ ,
	\end{array} \right.
\end{displaymath}
where $b^+, b^-\in \mathbb{R}, B^+, B^-\in \mathbb{R}^n$ and $0<\bar{b}^{-1}\leq b(x)\leq \bar{b}$.
\begin{theorem}\label{rego}
	Let $r>0$ be a fixed number. Let $b(x)$ and $B(x)$ be as above. Let $U\in H^1(Q_r)$ be a solution to
	\begin{equation*}
	\mbox{div}(b(x)\,B(x)\,\nabla U)=0\ ,\quad \mbox{in }Q_r.
	\end{equation*}
	Then, there exist positive constants $0<\alpha'\le 1, C>0$ depending on $\bar{b},r,\lambda_0$ and $n$ only, such that for any $\rho\le \frac{r}{2}$ and
	for any $x\in Q_{r-2\rho}$, the following estimate holds
	\begin{equation}\label{stimareg}
	\|\nabla U\|_{L^{\infty}(Q_{\rho}(x))}+ {\rho}^{{\alpha}'}|\nabla U|_{\alpha',Q_{\rho}(x)\cap Q^+_{r} } + {\rho}^{{\alpha}'}|\nabla U|_{\alpha',Q_{\rho}(x)\cap Q^-_{r} }
	\le \frac{C}{\rho^{1+n/2}}\|U\|_{L^2(Q_{2\rho}(x))}.
	\end{equation}
	
\end{theorem}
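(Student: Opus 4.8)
The plan is to prove Theorem \ref{rego} by a combination of classical interior Schauder-type estimates away from the interface $\{x_n=0\}$ together with a transmission-problem regularity argument across the interface, concluding with a standard covering/scaling argument. First I would reduce to the case of a cube centred at a point $x\in Q_{r-2\rho}$ and distinguish two regimes: the \emph{interior} regime, in which $Q_{2\rho}(x)$ lies entirely in $Q^+_r$ or entirely in $Q^-_r$, and the \emph{interface} regime, in which $Q_{2\rho}(x)$ straddles $\{x_n=0\}$. In the interior regime, the operator $\mathrm{div}(b(x)B(x)\nabla\cdot)$ has $C^\mu$ coefficients (since $B\in C^\mu$ and the affine function $b$ is smooth and bounded below by $\bar b^{-1}$), so De Giorgi--Nash--Moser plus classical $C^{1,\alpha'}$ interior Schauder estimates (see e.g.\ \cite{G-T}) give $\|\nabla U\|_{L^\infty(Q_\rho(x))}+\rho^{\alpha'}|\nabla U|_{\alpha',Q_\rho(x)}\le C\rho^{-1-n/2}\|U\|_{L^2(Q_{2\rho}(x))}$, after the usual rescaling to normalise the cube size; the $L^2$ norm on the right enters through the Caccioppoli inequality and the sub-solution estimate.

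The substantive step is the interface regime. Here $U$ solves a transmission problem: $U|_{Q^+}$ and $U|_{Q^-}$ each solve a uniformly elliptic equation with $C^\mu$ coefficients in their respective half-cubes, and across $\Sigma=\{x_n=0\}$ the traces of $U$ and of the co-normal derivatives $b B\nabla U\cdot e_n$ match. The plan is to invoke (or reprove along standard lines) the global-in-each-half $C^{1,\alpha'}$ regularity for such transmission problems: one flattens the interface (here it is already flat), freezes the coefficients $b(0)B(0^\pm)$ at the origin to obtain a piecewise-constant-coefficient transmission problem whose solutions are $C^{1,\alpha'}$ up to $\Sigma$ from either side by explicit reflection/odd-even extension arguments, and then treats the $C^\mu$ variation of the coefficients as a perturbation, absorbing the error via a Campanato-type iteration on dyadic cubes centred at interface points. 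This yields the one-sided Hölder seminorm bounds $\rho^{\alpha'}|\nabla U|_{\alpha',Q_\rho(x)\cap Q^\pm_r}$ appearing in \eqref{stimareg}, with the right-hand side again controlled by $\rho^{-1-n/2}\|U\|_{L^2(Q_{2\rho}(x))}$ via Caccioppoli and local boundedness.

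I would then glue the two regimes together: for a general centre $x$ and radius $\rho\le r/2$, either one is in the interior regime and done, or $Q_{2\rho}(x)$ meets the interface, in which case one compares with the estimate at the nearest interface point, using that $\mathrm{dist}(x,\Sigma)\lesssim\rho$ so that the interface estimate on a comparable cube dominates. A finite-overlap covering of $Q_\rho(x)$ by cubes of size comparable to $\rho$, each falling into one of the two regimes, together with the elementary inequality $|\nabla U|_{\alpha',A}\le |\nabla U|_{\alpha',A\cap Q^+}+|\nabla U|_{\alpha',A\cap Q^-}+2\|\nabla U\|_{L^\infty(A)}$ for the Hölder seminorm on a set split by a hyperplane, assembles \eqref{stimareg}. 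All constants depend only on $\bar b$, $r$, $\lambda_0$, $n$ (and the $C^\mu$-modulus, which is itself controlled in terms of these via the structure of $b$ and the ellipticity constants), as required.

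The main obstacle is the interface estimate, i.e.\ establishing $C^{1,\alpha'}$ regularity up to $\{x_n=0\}$ from each side for the transmission problem with merely $C^\mu$ coefficients. The delicate point is that $\nabla U$ is genuinely discontinuous across $\Sigma$ (only the co-normal component is continuous), so one cannot expect global $C^{1,\alpha'}$; the one-sided formulation in \eqref{stimareg} is exactly what the transmission structure allows, and the proof must be organised as a Campanato/Morrey-space iteration adapted to the piecewise-smooth setting rather than a naive application of interior Schauder theory. Getting the $L^2$-norm (rather than a higher norm) of $U$ on the right, with the sharp power $\rho^{-1-n/2}$, requires interleaving the Schauder iteration with Caccioppoli's inequality and the local boundedness estimate at each dyadic scale.
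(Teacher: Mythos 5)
The paper does not prove this theorem itself: its ``proof'' is a one-line referral to Li--Vogelius \cite{Li-Vo}, and your outline (interior Schauder estimates away from $\{x_n=0\}$, a frozen-coefficient transmission problem handled by reflection, a Campanato-type perturbation iteration to absorb the $C^{\mu}$ variation, and a finite covering to glue the two regimes with Caccioppoli supplying the $L^2$ right-hand side) is precisely the strategy of that cited work, so you are essentially reconstructing the proof the paper delegates. The only point worth flagging is that the constants $C$ and $\alpha'$ must in fact also depend on $\mu$ and on the piecewise H\"older norms of $B^{\pm}$ (a dependence the theorem's statement, and your closing parenthetical, both gloss over), since $b$ alone being affine does not control the modulus of continuity of the full coefficient $bB$.
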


\begin{proof}
	For the proof we refer to Li-Vogelius \cite{Li-Vo}, where piecewise $C^{1,\alpha'}$
	estimates for solutions to elliptic equations in divergence form with
	piecewise H\"{o}lder continuous coefficients have been demonstrated.
\end{proof}

\begin{proof}[Proof of Theorem \ref{teorema stime asintotiche}]
	
	Let us consider a conductivity $\sigma$ of the form
	\[
	\sigma(x) = \sum_{k=1}^{N} \gamma_k(x)\,\chi_{D_k}(x)\, A(x).
	\]
	
	First, fix $k=1,\dots,K$. Up to a rigid transformation, we the point $P_{k+1}$ can be identified with the origin and $\gamma_{k}(0)=\gmeno$ and $\gamma_{k+1}(0)=\gpiu$ for $k\in\R$. For any $x=(x',x_n)$, denote $x^*=(x',-x_n)$. 
	
	Let us introduce a linear change of coordinates 
	\begin{align*}
	L:&\R^n\rightarrow \R^n\\
	&\xi \mapsto L\xi:= R\,J \xi,
	\end{align*}
	where $J=\sqrt{A^{-1}(0)}$ and the matrix $R$ is orthogonal and represents the planar rotation in $\R^n$ that rotates the unit vector $\frac{v}{\|v\|}$, where $v=\sqrt{A(0)}e_n$ to the $n$th standard unit vector $e_n$ and such that
	\[
	R|_{(\pi)^{\perp}}=Id|_{(\pi)^{\perp}},
	\]
	where $\pi$ is the plane generated by $e_n$ and $v$ and $(\pi)^{\perp}$ is the orthogonal complement of $\pi$ (see \cite{G-S}). Moreover, the following relations hold
	\begin{itemize}
		\item $A(0)=L^{-1}\cdot (L^{-1})^T$,
		\item $(L\xi)\cdot e_n =\frac{1}{\|v\|}\xi\cdot e_n$,
		\item $\sigma_{A(0)}(\xi)=L^{-1}\sigma_I(L\xi)(L^{-1})^T$, where $\sigma_I(L\xi)=\sigma_I(x)= (\gmeno+(\gpiu-\gmeno)\chi^+(x))I$.
	\end{itemize}
	
	A fundamental solution of the operator $\mbox{div}_{\xi}((\gmeno+(\gpiu-\gmeno)\chi^+(\cdot))A(0)\nabla_{\xi}\cdot)$ has the following explicit form
	\begin{equation}\label{fund}
	\begin{array}{ll}
	H_{A(0)}(\xi,\eta)= \left\{
	\begin{array}{lcl} 
	|J| \Big(\frac{1}{\gpiu}\Gamma(L\xi,L\eta) + \frac{\gpiu-\gmeno}{\gpiu(\gpiu+\gmeno)}\Gamma(L\xi,L^*\eta)\Big) , &\textrm{$\textnormal{if }\ \xi_n,\eta_n>0$},\\
	|J| \Big(\frac{2}{\gpiu+\gmeno}\Gamma(L\xi,L\eta)\Big) , &\mbox{if}\ \xi_n \eta_n<0,\ \\
	|J| \Big(\frac{1}{\gmeno}\Gamma(L\xi,L\eta) + \frac{\gmeno-\gpiu}{\gmeno(\gpiu+\gmeno)}\Gamma(L\xi,L^*\eta)\Big) ,
	&\textrm{$\textnormal{if }\ \xi_n, \eta_n<0$},
	\end{array}
	\right.
	\end{array}
	\end{equation}
	where $|J|$ denotes the determinant of the matrix $J$ and $L^*$ is the matrix whose coefficients follow the rule
	\[
	l^*_{ij}=l_{ij}, \quad \mbox{for }i=1,\dots,n-1, j=1,\dots,n,\qquad l^*_{nj}=-l_{nj} \quad \mbox{for }j=1,\dots,n.
	\]
	Set $H(\xi,\eta)=H_{A(0)}(\xi,\eta)$. Denote with $\widetilde{\Omega}$ the augmented domain obtained after having performed the change of coordinates $L$. Define the distribution
	\begin{eqnarray}\label{diff}
	R(\xi,\eta)=G(\xi,\eta) - H(\xi,\eta),
	\end{eqnarray}
	where $G(\cdot,\eta)$ is the weak solution to \eqref{greensystem}, then $R(\xi,\eta)$ is a weak solution to the following boundary value problem
	\begin{displaymath}
	\left\{ 
	\begin{array}{ll}
	\mbox{div}_{\xi}\big(\sigma(\cdot)\nabla R(\cdot,\eta)\big)=-\mbox{div}_{\xi}\big((\sigma(\cdot)-\sigma_0(\cdot))\nabla_{\xi} H(\cdot,\eta)\big), &\mbox{in }\widetilde{\Omega},\\
	R(\cdot,\eta)=-H(\cdot,\eta), &\mbox{on }\p\widetilde{\Omega},
	\end{array}
	\right.
	\end{displaymath}
	where $\sigma_0(\cdot)=(\gmeno+(\gpiu-\gmeno)\chi^+(\cdot))A(0)$. By the representation formula over $\widetilde{\Omega}$, it follows that $R$ satisfies the following integral identity
	\begin{eqnarray}\label{reprformR}
	&&\  R(\xi,\eta)=\ - \int_{\widetilde{\Omega}}(\sigma(\zeta)-\sigma_0(\zeta))\nabla_{\zeta}H(\zeta,\eta)\cdot\nabla_{\zeta}{G}(\zeta,\xi)d\zeta + \int_{\partial \widetilde{\Omega}}\sigma(\zeta)\nabla G(\zeta,\xi)\cdot \nu \, H(\zeta,\eta)dS(\zeta).
	\end{eqnarray}
	The integral over $\p\widetilde\Omega$ at the right hand side of \eqref{reprformR} can be easily bounded from above as in  \cite[Equation (4.10)]{A-dH-G-S} by a constant $C>$ which depends on the \textit{a-priori} data only.
	
	Set $\gamma_0(\cdot)=\gmeno+(\gpiu-\gmeno)\chi^+(\cdot)$. Locally, in a neighbourhood of the origin, the following estimate holds
	\begin{align}\label{gg0}
		|\sigma(\zeta)-\sigma_0(\zeta)|\le |\gamma(\zeta)A(\zeta)-\gamma_0(\zeta)A(0)|\le  |\gamma(\zeta)|\,|A(\zeta)-A(0)| + |\gamma(\zeta)-\gamma_0(\zeta)|\,|A(0)|\le C\,|\zeta|,
	\end{align}
where $C>0$ depends on $\bar{\gamma}, \bar{A}$ only.
	Moreover by \eqref{boundgreen} we find the following two pointwise bounds:
	\begin{eqnarray*}
		&& |\nabla_{\zeta} G(\zeta, \xi)|\le C |\zeta - \xi|^{1-n}\ \ \mbox{for every}\ \ \zeta,\xi \in Q_{r_0}\ , \\
		&&|\nabla_{\zeta} H(\zeta, \eta)|\le C |\zeta - \eta|^{1-n}\ \mbox{for every}\ \ \zeta, \eta \in Q_{r_0} \ ,
	\end{eqnarray*}
	which together with \eqref{gg0} leads to
	\begin{equation}\label{2aR}
		\left|\int_{\widetilde{\Omega}}(\sigma(\zeta)-\sigma_0(\zeta))\nabla_{\zeta}H(\zeta,\eta)\cdot\nabla_{\zeta}{G}(\zeta,\xi)\:d\zeta\right| \le {C_1}|\xi-\eta|^{3-n-\alpha},
	\end{equation}
for any $0<\alpha<1$. In conclusion, for $\xi\in B^+_{{r_0}}$, $\eta=\eta_ne_n$ with $\eta_n\in(-{r_0},0)$,
	\begin{eqnarray}\label{2R}
		|R(\xi,\eta)| \le {C}|\xi-\eta|^{3-n-\alpha}.
	\end{eqnarray}
	We focus on the estimate for $\nabla _{\xi} R(\xi,e_n\eta_n)$. Fix $\xi\in B^+_{r_0/4}$ and $\eta_n\in(-r_0/4, 0)$, consider the cylinder
	$Q=B'_{h/4}(\xi')\times \left (\xi_n, \xi_n +\frac{h}{4} \right)$.
	where $h=|\xi-\eta|$\ . Notice that  $Q\subset Q^{+}_{\frac{r_0}{2}}$, $Q\subset Q_{\frac{h}{2}}(\xi)$ and $\xi\in \partial Q$.

	By Theorem \ref{rego} it follows that
	\begin{eqnarray}\label{hold}
			|\nabla_{\xi} G(\cdot, e_n\eta_n)|_{\alpha',Q}\ , \ |\nabla_{\xi} H (\cdot, e_n\eta_n)|_{\alpha',Q} \le C  h^{-\alpha' +1-n} \ .
	\end{eqnarray}
		
	Hence by \eqref{diff} and \eqref{hold} we 
	\begin{eqnarray}\label{holdR}
		|\nabla_{\xi} R(\cdot, e_n\eta_n)|_{\alpha',Q}\le C  h^{-\alpha' +1-n}.
	\end{eqnarray}
	From the following interpolation inequality
	\begin{equation*}
		\| \nabla _{\xi} R(\cdot, e_n\eta_n)\|_{L^{\infty}(Q)}\le C \left(\| R(\cdot, e_n\eta_n)\|^{\alpha'/1+\alpha'}_{L^{\infty}(Q)}\left|\nabla_{\xi} R(\cdot, e_n\eta_n) \right|^{1/1+\alpha'}_{\alpha',Q}+\frac{1}{h}\|R(\cdot,\eta_ne_n)\|_{L^{\infty}(Q)}\right),
	\end{equation*}
	together with \eqref{2R} we obtain
	\begin{eqnarray*}
		|\nabla_{\xi} R(\cdot,\eta_ne_n)|\le C h ^{\theta_1 +1-n},
	\end{eqnarray*}
	where $\theta_1= \frac{\alpha'(1-\alpha)}{1+\alpha}$\ .
		
	Now, we look for a pointwise bound for $\nabla_{\eta}\nabla_{\xi} R(\xi,\eta)$. Define the cylinder $\hat{Q}= B'_{\frac{h}{8}}(0)\times \left(\eta_n- \frac{h}{8}, \eta_n \right)$. As before, we have that $\hat{Q}\subset Q{^-}_{\frac{r_0}{4}},\hat{Q}\subset Q_{\frac{h}{4}}(\eta)$ and $\xi\notin  Q_{\frac{h}{4}}(\eta)$.
		
	Let $k$ be an integer such that $k\in \{1,\dots, n\}$. Notice that $\partial_{\xi_k}\Gamma(\xi, \cdot)$ is a weak solution to the Laplace equation
	\[
	\Delta_{\eta} (\partial_{\xi_k}\Gamma(\xi,\cdot))=0\ \ \ \mbox{in}\ \ Q_{\frac{h}{4}}(\eta)\ ,
	\] 
	and $\partial_{\xi_k}G(\xi,\cdot)$ is a weak solutions to the problem
	\begin{equation*}
	\left\{ 
	\begin{array}{ll}
	\mbox{div}(\sigma(\cdot)\nabla \p_{\xi_k}G_i(\xi,\cdot))=-\delta(\xi-\cdot) &\mbox{in}\ \ Q_{\frac{h}{4}}(\eta),\\
	G_i(\xi,\cdot)=0 &\mbox{on }\p\Omega.
	\end{array}
	\right.
	\end{equation*}
	By Theorem \ref{rego}, it follows that
	\begin{eqnarray}\label{1s}
		|\nabla_{\eta}\partial_{\xi_k}G(\xi,\cdot)|_{\alpha', \hat{Q}}\le C  h^{-\alpha'-1-\frac{n}{2}}\|\partial_{\xi_k}G(\xi,\cdot) \|_{L^2(Q_{\frac{h}{4}}(\eta))}.
	\end{eqnarray}
	Fix $\bar{\eta} \in Q_{\frac{h}{4}}(\eta)$, then $\bar\eta\notin Q_{\frac{h}{16}}(\xi)$. By Theorem \ref{rego}, it follows that
	\begin{eqnarray}\label{2s}
		\|\nabla_{\xi} G(\cdot, \bar\eta)\|_{L^{\infty}(Q_{\frac{h}{32}}(\xi))}\le C h^{-1-\frac{n}{2}}\|G(\cdot,\bar\eta)\|_{L^{\infty}(Q_{\frac{h}{16}}(\xi))}\le C h ^{1-n}.
	\end{eqnarray}
	From \eqref{1s} and \eqref{2s} it follows that
	\begin{eqnarray}\label{1h}
		|\nabla_{\eta}\partial_{\xi_k}G(\xi,\cdot)|_{\alpha', \hat{Q}} \le C h^{-\alpha'-n}\ \ .
	\end{eqnarray}
	By the representation formula for $\Gamma$, 
	\begin{eqnarray}\label{2h}
		|\nabla_{\eta}\partial_{\xi_k}{\Gamma}(\xi,\cdot)|_{\alpha', \hat{Q}} \le C h^{-\alpha'-n},
	\end{eqnarray}
	and by \eqref{1h} and \eqref{2h}, 
	\begin{eqnarray}\label{2e}
		|\nabla_{\eta}\partial_{\xi_k}R(\xi,\cdot)|_{\alpha', \hat{Q}} \le C h^{-\alpha'-n}\ .
	\end{eqnarray}
	Arguing as above, the following estimate holds:
	\begin{eqnarray}\label{2ee}
		\|\partial_{\xi_k}R(\xi,\cdot)\|_{L^{\infty}(\hat{Q})}\le C h^{\theta_1 +1-n}.
	\end{eqnarray}
	By the following interpolation inequality
	\begin{eqnarray}
		\|\nabla_{\eta}\partial_{\xi_k}R(\xi,\cdot)\|_{L^{\infty}(\hat{Q})}\le C \|\partial_{\xi_k}R(\xi,\cdot) \|^{\frac{\alpha'}{\alpha'+1}}_{L^{\infty}(\hat{Q})}|\nabla_y\partial_{\xi_k}R(\xi,\cdot)|^{\frac{1}{\alpha'+1}}_{\alpha', \hat{Q}}
	\end{eqnarray}
	and by \eqref{2ee} and \eqref{2e}, we conclude that
	\begin{eqnarray}
		|\nabla_{\eta}\partial_{\xi_k}R(\xi,\eta)|\le C h^{\theta_2-n},
	\end{eqnarray}
	where $\theta_2=\frac{\theta_1\alpha'}{1+\alpha'}$\ .	
\end{proof}

\subsection{Propagation of smallness}
	In order to prove Theorem \ref{proposizione unique continuation finale}, we state and prove a preliminary Proposition \ref{preliminaryprop}, where we determine a pointwise bound for the weak solution to the conductivity equation in the interior of $\widetilde\Omega$.
	\begin{proposition}\label{preliminaryprop}
		Let $v\in H^1(\widetilde{\Omega})$ be a weak solution to
		\begin{equation}\label{eq Wk}
		\mbox{div}(\sigma\,\nabla v)=0 \qquad \mbox{in }\mathcal{W}_k,
		\end{equation}
		where $k\in\{0,\dots,K-1\}$. Suppose there exist $E,\epsilon>0$ such that
		\begin{eqnarray}\label{first}
		&|v(x)|\leq r_0^{2-n}\epsilon\qquad &\forall x\in D_0,
		\end{eqnarray}
		\begin{eqnarray}\label{second}
		&|v(x)|\leq E\big(r_0 d(x)\big)^{1-(n\slash 2)}\qquad &\forall x\in \mathcal{W}_k
		\end{eqnarray}
		
		Then, for every $r\in (0,d_1]$,
		\begin{equation}
		|v(w_{\hh}(P_{k+1}))|\leq r_0^{2-n}C^{\hh}(E+\epsilon)\Big(\omega^{(K)}_{1\slash C}\Big(\frac{\epsilon}{\epsilon+E}\Big)\Big)^{(1\slash C)^{\hh}}
		\end{equation}
		where $C>1$ depends only on a-priori data.
	\end{proposition}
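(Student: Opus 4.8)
The plan is to adapt to the present anisotropic setting the scheme of propagation of smallness through a chain of subdomains developed in \cite[Section~4]{A-V}. There are two building blocks. The first is the quantitative three-spheres inequality for $H^1$ solutions of $\mbox{div}(\sigma\nabla v)=0$ on balls contained in a single subdomain $D_m$; this is available because $\sigma=\gamma A$ is Lipschitz inside each $D_m$, so the coefficient is regular enough for the classical interior estimate, which gives a dependence of power (H\"older) type. The second is a three-spheres inequality at a point of a \emph{flat} interface $\Sigma_{m+1}$: there $\sigma$ jumps, so the interior estimate does not apply and one instead uses the flatness of $\Sigma_{m+1}$ together with the transmission conditions satisfied by the weak solution, namely continuity of $v$ and of the conormal derivative $\sigma\nabla v\cdot\nu$ across $\Sigma_{m+1}$, as in \cite{A-V}; because of the coefficient discontinuity this interface estimate carries a logarithmic rather than a power modulus, and it is the iteration of these interface estimates along the chain that produces the composition $\omega^{(K)}_{1/C}$. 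The subordination properties \eqref{property1}--\eqref{property2} of $\omega_{1/C}$ are exactly what let the finitely many multiplicative constants generated along the chain be absorbed into such a composition.

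First I would fix, using the chain structure of Subsection~\ref{subsec assumption domain} and the parameters in \eqref{parameters}, a finite family of pairwise overlapping balls of radii comparable to $\rho_1 a^{m}$, the first contained in $D_0$, crossing each interface $\Sigma_{m+1}$, $m=0,\dots,k-1$, through a ball centred near $P_{m+1}$, and ending with a ball $B_\star\subset D_k\cap Q_{r_0/3}(P_{k+1})$ of radius $\sim d_1$. On the first ball \eqref{first} gives $\|v\|_{L^2}\le C\,r_0^{n/2}r_0^{2-n}\epsilon$, while \eqref{second} provides on each ball $B$ of the family the a priori majorant $\|v\|_{L^2(B)}\le C\,r_0^{n/2}E\,(r_0\,\mathrm{dist}(B,\mathcal{U}_k))^{1-n/2}$, which is $\le C\,r_0^{n/2}r_0^{2-n}E$ as long as $\mathrm{dist}(B,\mathcal{U}_k)\ge c\,r_0$. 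Iterating the interior three-spheres inequality along the balls interior to each $D_m$ and the interface three-spheres inequality across each $\Sigma_{m+1}$, passing from $L^2$ to $L^\infty$ by local boundedness for weak solutions of uniformly elliptic operators (as in the proof of Theorem~\ref{teorema principale}, see \cite[Chapter~8]{G-T}), and absorbing the accumulated constants by \eqref{property1}--\eqref{property2}, I would reach
\[
\|v\|_{L^\infty(B_\star)}\le C\,r_0^{2-n}(E+\epsilon)\,\omega^{(K)}_{1/C}\!\left(\frac{\epsilon}{\epsilon+E}\right),
\]
the number of compositions being the number of interface crossings, which is at most $K$; we may assume $\epsilon/(\epsilon+E)<e^{-2}$, the estimate being otherwise trivial.

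The last step is the descent towards $\Sigma_{k+1}$. Starting from $B_\star$, I would iterate the interior three-spheres inequality along the points $w_l(P_{k+1})=P_{k+1}-\lambda_l\nu(P_{k+1})$, $l=1,\dots,\bar h=\bar h(r)$, on balls of geometrically shrinking radii $\rho_l=a^{l-1}\rho_1$, all contained in $D_k$ by the choice of $\lambda_1,a,\rho_1$ in \eqref{parameters}. Writing $M\sim r_0^{2-n}(E+\epsilon)$ for the uniform majorant of $|v|$ on this region coming from \eqref{second} and $\tau=1/C\in(0,1)$, each step converts a bound $M\,\eta^{\tau}$ at one scale into a bound $C\,M\,\eta^{\tau}$ at the next scale and shifted centre, so that after $\bar h$ iterations the exponent has become $\tau^{\bar h}=(1/C)^{\bar h}$ and the accumulated constant $C^{\bar h}$. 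Evaluating at $w_{\bar h}(P_{k+1})$ and inserting the bound on $B_\star$ just displayed gives
\[
|v(w_{\bar h}(P_{k+1}))|\le r_0^{2-n}\,C^{\bar h}(E+\epsilon)\left(\omega^{(K)}_{1/C}\!\left(\frac{\epsilon}{\epsilon+E}\right)\right)^{(1/C)^{\bar h}},
\]
which is the asserted estimate, with $\bar h=\bar h(r)$ as in \eqref{hbar}; \eqref{Sh2} can be used later to re-express $C^{\bar h}$ and $(1/C)^{\bar h}$ in terms of $r/d_1$ when this proposition is applied in Proposition~\ref{proposizione unique continuation finale}.

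I expect the main obstacle to be the interface step, which is also the only place where the anisotropy genuinely enters: across $\Sigma_{m+1}$ the coefficient $\sigma$ is discontinuous, so standard quantitative unique continuation does not apply directly and one must derive the interface three-spheres inequality from the flatness of $\Sigma_{m+1}$ and the transmission conditions, following \cite{A-V}. The accompanying bookkeeping is also delicate: one has to control the growth of the a priori majorant $E(r_0\,d(x))^{1-n/2}$ as $d(x)\to0$ near $\Sigma_{k+1}$ and check that after the $\bar h$-step descent it is still dominated by $r_0^{2-n}C^{\bar h}(E+\epsilon)$, and to verify that the finitely many constants produced along the chain are absorbed via \eqref{property1}--\eqref{property2} without changing the form of the estimate.
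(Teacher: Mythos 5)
Your overall architecture coincides with the paper's: a chain of balls inside $D_0$ started from \eqref{first}, one composition of $\omega_{1/C}$ accumulated per subdomain crossed (the paper organizes this as an induction on the index $m$ of the chain), and a final descent towards $\Sigma_{k+1}$ along the points $w_l(P_{k+1})$ with geometrically shrinking radii, which produces the factor $C^{\bar h}$ and the exponent $(1/C)^{\bar h}$. The bookkeeping you describe (absorbing constants via \eqref{property1}--\eqref{property2}, counting at most $K$ compositions, passing from $L^2$ to $L^\infty$ by local boundedness) also matches.

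The genuine gap is the interface crossing, which you reduce to an ``interface three-spheres inequality'' that you neither state nor derive, and which is not what the paper (or \cite{A-V}) actually uses. No three-spheres inequality centred at a point of $\Sigma_{m+1}$ is available here: the coefficient jumps across the interface and the solution is only piecewise $C^{1,\alpha'}$. The paper crosses each interface in three distinct steps. First, approaching $\widetilde{\Sigma}_{m+1}$ from the $D_m$ side with geometrically shrinking balls and then \emph{optimizing over the approach parameter} $r$ yields the logarithmic modulus for $\|v_m\|_{L^{\infty}(\widetilde{\Sigma}_{m+1})}$; so the logarithm is generated by this optimization, not by any interface estimate itself. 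Second, this smallness is upgraded to smallness of the full Cauchy data of $v_{m+1}$ on the flat portion: the tangential gradient is transferred by $\nabla_T v_{m+1}=\nabla_T v_m$, the conormal derivative by the transmission condition $A\nabla v_m\cdot\nu=A\nabla v_{m+1}\cdot\nu$, and the gradients themselves are controlled through the piecewise Li--Vogelius regularity of Theorem \ref{rego} together with a three-spheres argument applied to $\nabla v_m$ on the $D_m$ side. Third, the smallness of the Cauchy data is propagated into $D_{m+1}$ by Trytten's stability estimate for the Cauchy problem \cite{T}, combined with a trace inequality and a standard energy estimate. Assembling these three steps is precisely the content of the step you have black-boxed as ``following \cite{A-V}''; without it the induction on $m$, and hence the whole proof, does not close.
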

	\begin{proof}[Proof of Proposition \ref{preliminaryprop}]
			We adapt the proof in \cite[Proposition 4.4]{A-V} to the case of the anisotropic conductivity.
			
			To begin with, we introduce some parameters. Recall from \eqref{Wk} that $\mathcal{W}_k=\bigcup_{m=0}^k D_m$, then for the domain index $m\in \{0,\dots,K-1\}$,
			\begin{eqnarray}
			&r_l=\frac{r_0}{l}, \quad &\overline{\rho}=\frac{r_l}{32l\sqrt{1+L^2}},\\
			&y_{m+1}=P_{m+1}-\frac{r_l}{32}\nu(P_{m+1}),\quad &\tilde{y}_{m+1}=P_{m+1}+\frac{r_l}{32}\nu(P_{m+1}),\\
			&v_m=v|_{D_m},
			\end{eqnarray}
			where $P_{m+1}$ and $\nu(P_{m+1})$ have been defined in subsection \ref{subsection Green function}. We claim that for every $m\in \{0,\dots,K-1\}$,
			\begin{equation}\label{claim1}
			\|v\|_{L^{\infty}(B_{\overline{\rho}}(\tilde{y}_{m+1}))}\leq r^{2-n}_0C^{m+1}(E+\epsilon)\Bigg(\omega^{(m+1)}_{1\slash C}\Big(\frac{\epsilon}{\epsilon+E}\Big)\Bigg)
			\end{equation}
 and prove \eqref{claim1} by induction as follows.
			
			\textit{Case $m=0$.}
				\\
				
				Up to a rigid transformation of coordinate, we can suppose that $y_1=-\frac{r_l}{32} e_n$. From \eqref{second},
				\begin{eqnarray}\label{estimate}
				&\|v\|_{L^{\infty}(D_0)}&\leq E\Big(r\sup_{x\in D_0}d(x)\Big)^{1-n\slash 2}.
				\end{eqnarray}

				Choose an arbitrary point $\yy\in \Sigma_{1}$, possibly different from $P_1$. Let $\phi$ be a Jordan curve joining $y_1$ to $w_1(\yy)$ such that $\phi\subset (D_0)_{\bar{d}}$, where $\bar{d}=\min\{\mbox{dist}(y_1,\Sigma_{1}),\mbox{dist}(w_1(\yy),\Sigma_{1})\}$, and $(D_0)_{\bar{d}}$ is connected. Notice that $w_1(\yy)\in (D_0)_{\bar{d}}$. 
				Let us define a set of points $\{\phi_i\}$, $i=1,\dots,s$ through the following process:
				\begin{itemize}
					\item $\phi_1=\phi(0)=y_1$;
					\item for $i>1$, set
					\[
					\phi_{i+1}=
					\begin{cases}
					\phi(t_i), &\text{if }|\phi_i-w_1(\yy)|>2r_l \text{ where }t_i=\max\{t_i: \, |\phi(t)-\phi_i|=2r_l\},\\
					w_1(\yy), &\text{if }|\phi_i-w_1(\yy)|<2r_l \text{ and set }s=i+1.
					\end{cases}
					\]
				\end{itemize}
				Apply the three sphere inequality in the case of pure principal part (see \cite[Theorem 2.1]{A-R-R-V}) on spheres centred at $\phi_1=y_1$ for which estimates \eqref{estimate} and \eqref{first} hold, with suitable rays $r, 3r, 4r$:
				\[
				\|v\|_{L^2(B_{3r}(y_1))}\leq Q\|v\|_{L^2(B_{r}(y_1))}^{\delta} \|v\|_{L^2(B_{4r}(y_1))}^{1-\delta} \leq Qr_0^{2-n}\epsilon^{\delta} E^{1-\delta},
				\]
				where $\delta = \frac{\log\Big(\frac{4\lambda}{3}\Big)}{\log\Big(\frac{4\lambda}{3}\Big)+C\log\Big(\frac{3}{\lambda}\Big)}$ and $Q>1$ is a constant which depends on $\lambda$, $L$, $\max\Big\{\frac{4r}{r_0},1\Big\}$.
				
				Notice that $B_{r}(\phi_{2})\subset B_{3r}(\phi_{1})=B_{3r}(y_1)$ so that the $L^{2}$-norm of $v$ on $B_{r}(\phi_2)$ can be easily estimated applying the three sphere inequality for the spheres of rays $r, 3r, 4r$ centred at $\phi_2$. Moreover, by \cite[Theorem 8.17]{G-T}, since $v$ is a weak solution to \eqref{eq Wk}, it follows that
				\[
				\|v\|_{L^{\infty}(B_{R\slash 2}(y))} \leq C \rho^{n\slash 2} \|v\|_{L^2(B_{R}(y))},
				\]
				where $C$ depends on $n$, $\lambda$ and $|\Omega|$. By iterating this process, we can estimate the $L^{\infty}$-norm of $v$ along the chain of spheres centred at points $\phi_i$ of the curve $\phi$. In conclusion,
				\begin{equation}\label{l2v}
				\|v\|_{L^{\infty}(B_{r}(w_1(\yy)))}\leq \|v\|_{L^{\infty}(B_{3r}(\phi_{s-1}))}\leq Cr^{2-n}\epsilon^{\delta^s} E^{1-\delta^s}.
				\end{equation}
				
				Fix $r\in (0,d_1]$. Recalling the parameters introduced in \eqref{parameters}, the following inclusions hold: 
				\[
				B_{\rho_{k+1}}(w_{k+1}(\yy))\subset B_{3\rho_k}(w_k(\yy))\subset B_{4\rho_k}(w_k(\yy))\subset C\Big(\yy,\nu(\yy),\beta_1,r_0\slash 3\Big),
				\]
				for any $k=1,2,\dots$. Notice that $\rho_1<r_l$ for a suitable $l$, then $B_{\rho_1}(w_{1}(\yy))\subset B_{r_l}(w_1(\yy))$. We proceed by moving from one centre to the successive one along the axis of the cone $C\Big(\yy,\nu(\yy),\beta_1,r_0\slash 3\Big)$ allowing to get closer and closer to the vertex $\yy$ and stop this process when we reach the sphere of radius $\rho_{\hh}$. 
				Then, from \eqref{l2v},
				\begin{equation}\label{estimate2}
				\|v\|_{L^{\infty}(B_{\rho_{\hh}}(w_{\hh}(\yy)))}\leq C \epsilon^{\delta^{s+\hh-1}}E^{1-\delta^{s+\hh-1}}.
				\end{equation}
				
				By the triangular inequality,
				\begin{equation}\label{triangv}
				|v(\yy)|\leq |v(\yy)-v(\yy-r\nu(\yy))|+|v(\yy-r\nu(\yy))|.
				\end{equation}
			First, we estimate the second term on the righthand side of \eqref{triangv}. Since $\yy-re_n\in B_{\rho_{\hh}}(w_{\hh}(\yy))$,
				\begin{align*}
				|v(\yy-r\nu(\yy))|\leq C r_0^{2-n} \epsilon^{\delta^{s+\hh-1}}E^{1-\delta^{s+\hh-1}}
				\leq C r^{2-n}(\epsilon + E)\Big(\frac{\epsilon}{E+\epsilon}\Big)^{1-\delta^{s+\hh-1}}.\nonumber
				\end{align*}
				Secondly, we estimate the first term on the righthand side of \eqref{triangv}. Since $\bar{y}\in\mathcal{W}_k$, by \eqref{second},
				\begin{align*}
				|v(\yy)|\leq C E \Big(r_0 \sup\limits_{x\in D_0}d(x)\Big)^{1-(n\slash2)}\leq Cr_0^{2-n} E.
				\end{align*}
				Hence, by Theorem \ref{rego},
				\begin{eqnarray}
				|v(\yy)-v(\yy-r\nu(\yy))| \leq \|\nabla v\|_{L^{\infty}(Q_{r_0\slash 3})}r
				\leq \frac{c}{r_0^{1+n\slash2}}\|v\|_{L^2(Q_{2r_0\slash 3})} r
				\leq Cr_0^{2-n} (E+\epsilon)\Big(\frac{r}{r_0}\Big).\nonumber
				\end{eqnarray}
				Therefore,
				\begin{align*}
				|v(\yy)|\leq C r_0^{2-n}(E+\epsilon)\Big(\frac{r}{r_0}+\Big(\frac{\epsilon}{E+\epsilon}\Big)^{\delta^{s+\hh-1}}\Big).
				\end{align*}
				Minimizing the righthand side of the last inequality with respect to $r$, the following inequality holds:
				\[
				|v(\yy)|\leq Cr_0^{2-n} (E+\epsilon_1) \Big|\log\Big(\frac{\epsilon}{E+\epsilon}\Big)^{\delta^{s}}\Big|^{-\frac{C}{2|\log \delta|}},
				\]
				for a suitable constant $C>0$. Set $\widetilde{\Sigma}_1=\Sigma_{1}\cap Q_{r_l}(P_1)$.	By the arbitrarity of $\yy$, we obtain
				\begin{equation}\label{F}
				\|v\|_{L^{\infty}(\widetilde{\Sigma}_1)}\leq C r_0^{2-n}(E+\epsilon) \omega_{1\slash C}\Big(\frac{\epsilon}{\epsilon+E}\Big).
				\end{equation}
				In order to prove our claim, we need to estimate the gradient of $v$. Recalling that $v_0 = v|_{D_0}$ and $v_1 = v|_{D_1}$ and $v_0$ is harmonic in $D_0$, from the three sphere inequality applied to $\nabla v_0$ and the results of \cite{Li-Vo}, one can recover the following estimates:
				\begin{equation}\label{G}
				\|\nabla v_0\|_{L^{\infty}(\widetilde{\Sigma}_1)}\leq C r_0^{2-n}(E+\epsilon)\omega_{1\slash C}\Big(\frac{\epsilon}{\epsilon+E}\Big),
				\end{equation}
				and 
				\begin{equation}\label{GT}
				\|\nabla_T v_1\|_{L^{\infty}(\widetilde{\Sigma}_1)}=\|\nabla_T v_0\|_{L^{\infty}(\widetilde{\Sigma}_1)}\leq \|\nabla v_0\|_{L^{\infty}(\widetilde{\Sigma}_1)}\leq C r_0^{2-n}(E+\epsilon) \omega_{1\slash C}\Big(\frac{\epsilon}{\epsilon+E}\Big).
				\end{equation}
				Now we can apply the following estimate due to Trytten \cite{T}:
				\begin{eqnarray}\label{trytten}
				\int_{D_1\cap B_{3r_l\slash 8}(P_1)} |\nabla v_1|^2 \leq &&\frac{c}{r_0} \Bigg(\int_{\widetilde{\Sigma}_1} v_1^2 + r_0^2 \int_{\widetilde{\Sigma}_1} |\nabla v_1|^2\Bigg)^{\delta_1} \times\nonumber\\
				&&\times\,\, \Bigg(\int_{\widetilde{\Sigma}_1} v_1^2 + r_0^2\int_{\widetilde{\Sigma}_1} |\nabla v_1|^2 + r_0 \int_{D_1\cap B_{r_l\slash 4}(P_1)}A |\nabla v_1|^2\Bigg)^{1-\delta_1}.
				\end{eqnarray}
				In order to bound the lefthand side of \eqref{trytten}, we have to estimate the following quantities:
				\begin{itemize}
					\item[i)] $\int_{\tilde{\Sigma}_1} v_1^2$; 
					\item[ii)] $\int_{\tilde{\Sigma}_1} |\nabla v_1|^2$;
					\item[iii)] $\int_{D_1\cap B_{r_l\slash 4}(P_1)}A |\nabla v_1|^2$.
				\end{itemize}
				For i), we can just use \eqref{F}. For ii), since $\nabla v_1 = \nabla_T v_1 + (\nabla v_1\cdot \nu)\nu$, 
				\[
				\int_{\widetilde{\Sigma}_1} |\nabla v_1|^2 \leq \int_{\widetilde{\Sigma}_1} |\nabla v_T|^2 + \int_{\widetilde{\Sigma}_1} |(\nabla v_1\cdot \nu) \nu|^2.
				\]
				The first integral on the righthand side can be estimated using \eqref{GT}. For the other term, one uses the transmission conditions
				\begin{eqnarray}
				A(x)\nabla v_0\cdot\nu = A(x)\nabla v_1\cdot \nu,\qquad &&\mbox{on }\Sigma_{1}.
				\end{eqnarray}
				Then, 
				\begin{equation}\label{G1}
				\|\nabla v_1\|_{L^{\infty}(\widetilde{\Sigma}_1)}\leq Cr_0^{1-n}(E+\epsilon) \omega_{1\slash C}\Big(\frac{\epsilon}{\epsilon+E}\Big).
				\end{equation}
				Finally, iii) follows from standard energy estimates.
				
				From the following trace estimate
				\begin{equation}\label{trace}
				\int_{D_1\cap B_{3r_l\slash 16}(P_1)} v_1^2\leq C \Bigg(r_0\int_{\widetilde{\Sigma}_1}v_1^2 + r_0^2\int_{D_1\cap B_{3r_l\slash 8}(P_1)} |\nabla v_1|^2\Bigg),
				\end{equation}
				\eqref{F}, \eqref{trytten}, \eqref{G1} and \eqref{trace} it follows that
				\begin{equation}
				\| v_1\|_{L^{\infty}(B_{\overline{\rho}}(\tilde{y}_{1}))}\leq Cr_0^{1-n}(E+\epsilon) \omega_{1\slash C}\Big(\frac{\epsilon}{\epsilon+E}\Big).
				\end{equation}
				\textit{Case $m\implies m+1$.} Set
				\[
				\epsilon_m=C^{m+1}r_0^{2-n}(E+\epsilon)\Bigg(\omega^{(m+1)}_{1\slash C}\Big(\frac{\epsilon}{\epsilon+E}\Big)\Bigg).
				\]
				By proceeding as above, we end up with the following inequality
				\begin{equation}
				\| v_1\|_{L^{\infty}(B_{\overline{\rho}}(\tilde{y}_{m+1}))}\leq Cr_0^{1-n}(E+\epsilon_m) \omega_{1\slash C}\Big(\frac{\epsilon_m}{\epsilon_m+E}\Big).
				\end{equation}
				By the properties \eqref{property1} and \eqref{property2} of $\omega_{1\slash C}$, the claim follows. To summarise it, we have proved that for any point close enough to the interface, the $L^{\infty}$-norm of $v$ on a small ball can be bound in terms of the quantities the righthand side of \eqref{first} and \eqref{second}. 
			
			For $m< K-1$ the thesis follows by the inequality \eqref{estimate2}, choosing $\yy = P_{m+1}$. 
			
			For $m=K-1$, by condition \eqref{second}, arguing as in the inequality \eqref{estimate2} and applying the claim, it follows that
			\begin{align*}
			|v\big(w_{\hh}(P_K)\big)|\leq C \big(r_0^{2-n}\epsilon_K\big)^{\delta^{s+\hh-1}}(r_0d_1a^{\hh-1}E)^{1-\delta^{s+\hh-1}}
			&\leq C^{\hh} r_0^{2-n} (\epsilon_K+E) \omega_{1\slash C}\Big(\frac{\epsilon_K}{\epsilon_K+E}\Big)\\
			&\leq C^{\hh} r_0^{2-n} (\epsilon + E) \omega_{1\slash C}^{(K)}\Big(\frac{\epsilon}{\epsilon+E}\Big)^{\big(1\slash C^{\hh}\big)}.
			\end{align*}
	\end{proof}	
	
	\begin{proof}[Proof of Proposition \ref{proposizione unique continuation finale}]
	To begin with, recall that for any $(y,z)\in (D_0)_r\times (D_0)_r$, for $r\in(0,d_1]$, the following bound holds:
	\[
	|S_{\mathcal{U}_k}(y,z)|\leq \|\sigma^{(1)}-\sigma^{(2)}\|_{L^{\infty}(\Omega)}\Big(\mbox{dist}(y,\mathcal{U}_k)\,\mbox{dist}(z,\mathcal{U}_k)\Big)^{1-n\slash 2}.
	\]
	For any $y,z \in B_{\rho_{\bar{h}(r)}}(w_{\bar{h}(r)}(Q_{k+1}))$, we apply Proposition \ref{preliminaryprop} once to $v=S_{\mathcal{U}_k}(\cdot,z)$ and then to $v=S_{\mathcal{U}_k}(y,\cdot)$ to obtain 
	\begin{eqnarray}\label{primastima}
	|S_{\mathcal{U}_k}({y},z)|\le  r_0^{{2-n}}C^{\bar{h}(r)}(E+\varepsilon_0)\left(\omega_{1/C}^{(2k)}\left(\frac{\varepsilon_0}{E+\varepsilon_0}\right)\right)
	^{\left(1/C\right)^{\bar{h}(r)}}.
	\end{eqnarray}
	Hence \eqref{estim1} follows from \eqref{primastima}.
	
	Since $S_{\mathcal{U}_k}(y_1,\dots, y_n,z_1,\dots, z_{n})$ is a weak solution in $D_k\times D_k$ of the elliptic equation
	
	\begin{eqnarray}
	{\mbox{div}}_y (\sigma^{(1)}(y){\nabla}_y S_{\mathcal{U}_k}(y,z)) + {\mbox{div}}_z (\sigma^{(2)}(z){\nabla}_z S_{\mathcal{U}_k}(y,z)) = 0,
	\end{eqnarray}
	for any $i,j=1,\dots, n$ it follows that

	\begin{eqnarray}
		& &\|\partial_{x_i}\partial_{x_j} S_{\mathcal{U}_k}(x_1,\dots, x_n,x_{n+1},\dots, x_{2n})\|_{L^{\infty}(B_{\frac{\rho_{\bar{h}(r)}}{2}}(w_{\bar{h}(r)}(Q_{k+1}))\times B_{\frac{\rho_{\bar{h}(r)}}{2}}(w_{\bar{h}(r)}(Q_{k+1})))}\nonumber\\
		& &\le\frac{C}{{\rho^2_{\bar{h}(r)-1}}}\| S_{\mathcal{U}_k }(x_1,\dots, x_n,x_{n+1},\dots, x_{2n})\|_{L^{\infty}( B_{\rho_{\bar{h}(r)}}(w_{\bar{h}(r)}(Q_{k+1}))\times B_{\rho_{\bar{h}(r)}}(w_{\bar{h}(r)}(Q_{k+1})))}
	\end{eqnarray}
	
	where $x_i=y_i, x_{i+n}=z_i$ for $i=1, \dots, n$.
	
	Moreover, since $d_{\bar{h}(r)-1}>r$, it follows that $r<\frac{d_0}{a\rho_0}\rho_{\bar{h}(r)}$, which in turn leads to
	\begin{eqnarray}\label{Sh}
	\|\partial_{x_i}\partial_{x_j} S_{\mathcal{U}_k}(x_1,\dots, x_{2n})\|_{L^{\infty}(\tilde{Q}_{\frac{{\rho_{\bar{h}(r)}}}{2}}(w_{\bar{h}(r)}(Q_{k+1})))} \le\frac{C}{r^2}\| S_{\mathcal{U}_k}(x_1,\dots, x_{2n})\|_{L^{\infty}(\tilde{Q}_{{{\rho_{\bar{h}(r)}}}}(w_{\bar{h}(r)}(Q_{k+1})))}.
	\end{eqnarray}
	
	By \eqref{Sh2}, it follows that	$r^{-2}\le \left(\frac{a}{r_0}\right)^2\left(\frac{1}{a^2}\right)^{\bar{h}(r)}$, and by combining \eqref{Sh} and the above inequality we get the desired estimate.
	
\end{proof}

\section*{Acknowledgments}
The authors wish to thank the anonymous referees for useful comments and remarks which
improved the presentation of the paper.RG would like to acknowledge the support of the Dipartimento
di matematica e geoscienze, Universit`a degli Studi di Trieste, where the research of
this paper was initiated during her sabbatical leave in 2020. The work of ES was performed
under the PRIN Grant No. 201758MTR2-007.


\end{document}